\numberwithin{equation}{section}
\newtheorem{thm}{Theorem}[section]
\newtheorem{pro}[thm]{Proposition}
\newtheorem{lem}[thm]{Lemma}
\newtheorem{cor}[thm]{Corollary}
\def\con{\subset}
\def\leukfrac#1/#2{\leavevmode
               \kern.1em
                \raise.9ex\hbox{\the\scriptfont0 ${}_#1$}
                \hskip -1pt\kern-.1em
                /\kern-.15em\lower.10ex\hbox{\the\scriptfont0 ${}_#2$}}
\theoremstyle{definition}
\newtheorem{remark}[thm]{Remark}
\newtheorem{que}[thm]{Question}
\theoremstyle{remark}
\def\bd{\mathop{\operator@font bd}\nolimits}
\def\diam{\mathop{\operator@font diam}\nolimits}
\begin{document}


\title[On quasi $\kappa$-metrizable spaces]
{On quasi $\kappa$-metrizable spaces}

\author{Vesko  Valov}
\address{Department of Computer Science and Mathematics, Nipissing University,
100 College Drive, P.O. Box 5002, North Bay, ON, P1B 8L7, Canada}
\email{veskov@nipissingu.ca}
\thanks{Research supported in part by NSERC Grant 261914-13}

\keywords{compact spaces, $\mathrm
I$-favorable spaces, $\kappa$-metrizable spaces, skeletal maps, skeletally generated spaces}

\subjclass{Primary 54C10; Secondary 54F65}


\begin{abstract}
The aim of this paper is to investigate the class of quasi $\kappa$-metrizable spaces. This class is invariant with respect to arbitrary products and contains Shchepin's \cite{sc1} $\kappa$-metrizable spaces as a proper subclass.
\end{abstract}

\maketitle

\markboth{}{Quasi $\kappa$-metrics}



\section{Introduction}
 Recall that a {\em $\kappa$-metric} \cite{sc1} on a space $X$  is a non-negative function $\rho(x,C)$ of two variables, a point $x\in X$ and a regularly closed set $C\subset X$, satisfying the following conditions:
\begin{itemize}
\item[K1)] $\rho(x,C)=0$ iff $x\in C$;
\item[K2)] If $C\subset C'$, then $\rho(x,C')\leq\rho(x,C)$ for every $x\in X$;
\item[K3)] $\rho(x,C)$ is continuous function of $x$ for every $C$;
\item[K4)] $\rho(x,\overline{\bigcup C_\alpha})=\inf_{\alpha}\rho(x,C_\alpha)$ for every increasing transfinite family $\{C_\alpha\}$ of regularly closed sets in $X$.
\end{itemize}
A $\kappa$-metric on $X$ is said to be {\em regular} if it satisfy also next condition
\begin{itemize}
\item[K5)] $\rho(x,C)\leq \rho(x,C')+\overline\rho(C',C)$ for any $x\in X$ and any two regularly closed sets $C,C'$ in $X$, where
$\overline\rho(C',C)=\sup\{\rho(y,C):y\in C'\}$.
\end{itemize}
We say that a function $\rho(x,C)$ is an {\em quasi $\kappa$-metric} (resp., a regular quasi $\kappa$-metric)  on $X$ if it satisfies the axioms $K2) - K4)$ (resp., $K2) - K5))$ and the following one:
\begin{itemize}
\item[K$1)^*$] For any $C$ there is a dense open subset $V$ of $X\setminus C$ such that $\rho(x,C)>0$ iff $x\in V$.
\end{itemize}
Obviously, we can assume that $\rho(x,C)\leq 1$ for all $x$ and $C$, in such a case we say that $\rho$ is a normed quasi $\kappa$-metric.

Quasi $\kappa$-metrizable spaces were introduced in \cite{v}. Our interest of this class was originated by Theorem 1.4 from \cite{v} stating that a compact space is quasi $\kappa$-metrizable if and only if it is skeletally generated.
Unfortunately, the presented there proof of the implication that any skeletally generated compactum is quasi $\kappa$-metrizable is not correct. Despite of this incorrectness, the class of
quasi $\kappa$-metrizable is very interesting. It is closed with respect to arbitrary products and contains as a proper subclass the $\kappa$-metrizable spaces. The aim of this paper is to investigate the class of quasi $\kappa$-metrizable spaces, and to provide a correct characterization of skeletally generated spaces.

The class of skeletally generated spaces was introduced in \cite{vv}. According to \cite[Theorem 1.1]{v}, a space is skeletally generated iff it is $I$-favorable in the sense of \cite{dkz}. Recall that a map $f:X\to Y$ is skeletal if $\mathrm{Int}\overline{f(U)}\neq\varnothing$) for every open $U\subset X$.
A space $X$ is skeletally generated \cite{vv} if there is an inverse system $\displaystyle S=\{X_\alpha, p^{\beta}_\alpha, A\}$ of
separable metric spaces $X_\alpha$ and skeletal surjective bounding maps $p^{\beta}_\alpha$
satisfying the following conditions:
$(1)$ the index set $A$ is $\sigma$-complete (every countable chain in $A$ has a supremum in $A$);
$(2)$ for every countable chain $\{\alpha_n\}_{n\geq 1}\subset A$ with $\beta=\sup\{\alpha_n\}_{n\geq 1}$ the space $X_\beta$ is a (dense)
subset of $\displaystyle\lim_\leftarrow\{X_{\alpha_n},p^{\alpha_{n+1}}_{\alpha_n}\}$;
$(3)$ $X$ is embedded in $\displaystyle\lim_\leftarrow
S$ and $p_\alpha(X)=X_\alpha$ for each $\alpha$, where $p_\alpha\colon\displaystyle\lim_\leftarrow S\to X_\alpha$ is the $\alpha$-th limit projection.
If in the above definition  all bounding maps $p^{\beta}_\alpha$ are open, we say that $X$ is openly generated.

All topological spaces are Tychonoff and the single-valued maps are continuous. 
The paper is organized as follows:
Section 2 contains the proof that any product of quasi $\kappa$-metrizable spaces is also quasi $\kappa$-metrizable. In Section 3 we provide some additional properties of quasi $\kappa$-metrizable spaces. For example, it is shown that this property is preserved by open and perfect surjections, and that the \v{C}ech-Stone compactification of any pseudocompact quasi $\kappa$-metrizable space is quasi $\kappa$-metrizable. The results from Section 3 imply that there exist quasi $\kappa$-metrizable spaces which are not $\kappa$-metrizable.  In Section 4 we introduce a similar wider class of spaces, the weakly $\kappa$-metrizable spaces, and proved that a compact space is skeletally generated if and only if it is weakly $\kappa$-metrizable. Hence,  every
skeletally generated space is also weakly  $\kappa$-metrizable. The converse implication is interesting for spaces with a countable cellularity only, but it is still unknown, see Questions 4.3 - 4.4.

\section{Products of quasi $\kappa$-metrizable  spaces}

Let $\mathcal{B}$ be a base for a space $X$ consisting of regularly open sets. A real-valued function $\xi:X\times\mathcal B\to [0,1]$
will be called a $\pi$-capacity if it satisfies the following conditions:
\begin{itemize}
\item[E1)] $\xi(x, U)=0$ for $x\not\in U$, and $0\leq\xi(x, U)\leq 1$ for $x\in U$.
\item[E2)] For any $U\in\mathcal B$ the set $\{x\in U:\xi(x, U)>0\}$ is dense in $U$.
\item[E3)] For any $U$ the function $\xi(x,U)$ is lower semi-continuous, i.e if $\xi(x_0,U)>a$ for some $x_0\in X$ and $a\in\mathbb R$, then there is a neighborhood $O_{x_0}$ with $\xi(x,U)>a$ for all $x\in O_{x_0}$.
\item[E4)] For any two mappings $U:T\to\mathcal B$ and $\lambda:T\to X$, where $T$ is a set with an ultrafilter $\mathcal F$, such that the limit
$\displaystyle\widetilde\lambda=\lim_{\mathcal F}\lambda(t)$ exists and $\displaystyle\lim_{\mathcal F}\xi(\lambda(t),U(t))> a > 0$, then there
exists $\widetilde U\in\mathcal B$ such that $\widetilde U\con\overline{\lim}_{\mathcal F}U(t)$ and $\xi(\widetilde\lambda,\widetilde U)>a$. Here,
$\overline{\lim}_{\mathcal F}U(t)=\bigcap_{F\in\mathcal F}\overline{\bigcup_{t\in F}U(t)}$.
\end{itemize}
A capacity is called regular if it satisfies also the following condition:
\begin{itemize}
\item[E5)] If $\xi(x,U)>a>0$, there exists $U_a\in\mathcal B$ such that $\xi(x,U_a)\geq a$  and $\xi(y,U)\geq\xi(x, U)-a$ for all $y\in U_a$.
\end{itemize}
Our definition of a $\pi$-capacity is almost the same as the Shchepin's definition \cite{sc2} of capacity, the only difference is that Shchepin requires
$\xi(x,U)>0$ for all $x\in U$.

\begin{lem}\label{lem1}
Suppose $\xi:X\times\mathcal B\to [0,1]$ is a (regular) $\pi$-capacity on $X$. Then the function $\rho_\xi(x,C)$,  $\rho_\xi(x,C)=0$ if $x\in C$ and $\rho_\xi(x,C)=\sup\{\xi(x,U):U\cap C=\varnothing\}$ otherwise, is a (regular) quasi $\kappa$-metric on $X$.
\end{lem}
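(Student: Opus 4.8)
The plan is to verify the axioms K$1)^*$, K2)--K4) (and K5) when $\xi$ is regular) for $\rho_\xi$, one at a time. Two observations are used throughout. First, for \emph{every} $x$ one may write $\rho_\xi(x,C)=\sup\{\xi(x,U):U\in\mathcal B,\ U\cap C=\varnothing\}$: if $x\in C$ and $U\cap C=\varnothing$ then $x\notin U$, so $\xi(x,U)=0$ by E1), and the supremum is $0=\rho_\xi(x,C)$. Second, an open set disjoint from a set is disjoint from its closure; hence, if an open $W$ satisfies $W\subseteq\overline{X\setminus C}$ for a regularly closed $C$, then $W\cap C=\varnothing$ --- for $\overline{X\setminus C}=X\setminus\inte C$, and an open set missing $\inte C$ misses $\overline{\inte C}=C$. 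Granting these, K2) is immediate from monotonicity of the supremum ($\{U:U\cap C'=\varnothing\}\subseteq\{U:U\cap C=\varnothing\}$ when $C\subseteq C'$, and $\rho_\xi(\cdot,C')$ vanishes on $C'$), and K$1)^*$ holds with $V=\{x:\rho_\xi(x,C)>0\}\subseteq X\setminus C$: it is open because, by the lower semicontinuity E3), the relation $\xi(x_0,U)>0$ for some $U\in\mathcal B$ with $U\cap C=\varnothing$ persists near $x_0$, where then $\rho_\xi(\cdot,C)\ge\xi(\cdot,U)>0$; and it is dense in $X\setminus C$ because every nonempty open $W\subseteq X\setminus C$ contains a nonempty $U\in\mathcal B$, whence E2) furnishes $x\in U\subseteq W$ with $\xi(x,U)>0$.

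Axiom K3) splits into two semicontinuities. That $\rho_\xi(\cdot,C)$ is lower semicontinuous is automatic, being a supremum of the lower semicontinuous functions $\xi(\cdot,U)$. For upper semicontinuity I would argue by contradiction, invoking E4): if it fails at $x_0$, pick $a>\rho_\xi(x_0,C)$ and a net $x_t\to x_0$ with $\rho_\xi(x_t,C)>a$, then $U_t\in\mathcal B$ with $U_t\cap C=\varnothing$ and $\xi(x_t,U_t)>a$. Let $\mathcal F$ be an ultrafilter refining the filter of tails of the net, so that $\widetilde\lambda:=\lim_{\mathcal F}x_t=x_0$ and $\lim_{\mathcal F}\xi(x_t,U_t)\ge a>a'$ for any fixed $a'\in(\rho_\xi(x_0,C),a)$ --- which is positive, and the ultrafilter limit exists since $\xi\le1$. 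Then E4) (with $\lambda(t)=x_t$, $U(t)=U_t$) yields $\widetilde U\in\mathcal B$ with $\widetilde U\subseteq\overline{\lim}_{\mathcal F}U_t\subseteq\overline{\bigcup_t U_t}\subseteq\overline{X\setminus C}$ and $\xi(x_0,\widetilde U)>a'$; by the second observation $\widetilde U\cap C=\varnothing$, so $\rho_\xi(x_0,C)\ge\xi(x_0,\widetilde U)>a'>\rho_\xi(x_0,C)$, a contradiction.

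For K4), put $C=\overline{\bigcup_\alpha C_\alpha}$ (which equals $\overline{\bigcup_\alpha\inte C_\alpha}$, the closure of an open set, hence regularly closed). From K2), $\rho_\xi(x,C)\le\inf_\alpha\rho_\xi(x,C_\alpha)$. If this were strict, pick $a$ strictly between the two sides and, for each $\alpha$, $U_\alpha\in\mathcal B$ with $U_\alpha\cap C_\alpha=\varnothing$ and $\xi(x,U_\alpha)>a$. Choosing an ultrafilter $\mathcal F$ on the index set that contains all final segments and applying E4) with $\lambda$ constantly $x$, we get $\widetilde U\in\mathcal B$ with $\widetilde U\subseteq\overline{\lim}_{\mathcal F}U_\alpha$ and $\xi(x,\widetilde U)>a'$ for some $a'\in(\rho_\xi(x,C),a)$. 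For each index $\alpha_0$, the increasing family gives $U_\alpha\cap C_{\alpha_0}=\varnothing$ for all $\alpha\ge\alpha_0$, so $\widetilde U\subseteq\overline{\bigcup_{\alpha\ge\alpha_0}U_\alpha}\subseteq\overline{X\setminus C_{\alpha_0}}$ and hence $\widetilde U\cap C_{\alpha_0}=\varnothing$ by the second observation; therefore $\widetilde U$ is disjoint from $\bigcup_{\alpha_0}C_{\alpha_0}$, so (being open) from its closure $C$, and $\rho_\xi(x,C)\ge\xi(x,\widetilde U)>a'$ --- a contradiction. Finally, when $\xi$ is regular, K5) follows from E5) by the standard triangle-inequality argument: we may assume $x\notin C$; fix $U\in\mathcal B$ with $U\cap C=\varnothing$, assume $\rho_\xi(x,C')<\xi(x,U)$ (else $\xi(x,U)\le\rho_\xi(x,C')$), and pick $a\in(\rho_\xi(x,C'),\xi(x,U))$. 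Then E5) gives $U_a\in\mathcal B$ with $\xi(x,U_a)\ge a$ and $\xi(y,U)\ge\xi(x,U)-a$ on $U_a$; since $\xi(x,U_a)\ge a>\rho_\xi(x,C')$, $U_a$ meets $C'$, and for $y\in U_a\cap C'$ we get $\rho_\xi(y,C)\ge\xi(y,U)\ge\xi(x,U)-a$, so $\overline\rho_\xi(C',C)\ge\xi(x,U)-a$. Letting $a\downarrow\rho_\xi(x,C')$ and then taking the supremum over admissible $U$ gives $\rho_\xi(x,C)\le\rho_\xi(x,C')+\overline\rho_\xi(C',C)$.

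The main obstacle is the two ultrafilter arguments, for K3) and K4): one has to arrange the ultrafilter so that the relevant limit point together with a \emph{positive} threshold $a'$ satisfies the strict-inequality hypothesis of E4), and then exploit regular-closedness (of $C$, resp.\ of each $C_\alpha$) through the second observation to turn ``$\widetilde U$ lies in the closure of a set missing $C$'' into ``$\widetilde U$ itself misses $C$''. Everything else is routine bookkeeping with suprema and semicontinuity.
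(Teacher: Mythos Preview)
Your proof is correct and follows the same route as the paper's. The paper checks K2) directly, delegates K3) and K4) to Shchepin's original argument \cite[Proposition 6, chapter 3]{sc1} (which is precisely the ultrafilter computation via E4) that you spell out), verifies K$1)^*$ by the same density argument from E2), and proves K5) from E5) exactly as you do. Your write-up is simply more self-contained, and your choice $V=\{x:\rho_\xi(x,C)>0\}$ for K$1)^*$ is slightly cleaner than the paper's, since it gives the ``iff'' in K$1)^*$ for free.
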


\begin{proof}
Suppose $\xi$ is a $\pi$-capacity on $X$. Clearly, $\rho_\xi$ satisfies condition $K2)$. According to the proof of \cite[Proposition 6, chapter 3]{sc1} $\rho_\xi$ also satisfies conditions $K3) - K4)$. To check condition $K1)^*$, let $C$ be a proper regularly closed subset of $X$. Then there is a subfamily $\{U_\alpha:\alpha\in A\}$ of $\mathcal B$ covering $X\setminus C$. For every $\alpha$ the set $V_\alpha=\{x\in U_\alpha:\xi(x,U_\alpha)>0\}$ is dense in $U_\alpha$. So  $V=\bigcup_{\alpha\in A}V_\alpha$ is dense in $X\setminus C$ and $\rho_\xi(x,C)>0$ for all $x\in V$.

Let show that if $\xi$ is a regular $\pi$-capacity, then $\rho_\xi$ satisfies condition $K5)$. Suppose $C, C'$ are two regularly closed subsets of $X$ and $x\in X$. Obviously, $\rho_\xi(x,C)\leq\rho_\xi(x,C')$ implies $\rho_\xi(x,C)\leq\rho_\xi(x,C')+\overline{\rho_\xi}(C',C)$. So, let $\rho_\xi(x,C)>\rho_\xi(x,C')$,  and choose an integer $m$ such that $\rho_\xi(x,C)>\rho_\xi(x,C')+1/n$ for all $n\geq m$. Hence, there is $U\in\mathcal B$ such that $U\cap C=\varnothing$ and $\xi(x,U)>a_n=\rho_\xi(x,C')+1/n$. So, according to condition $E5)$, there is $U_{a_n}\in\mathcal B$ such that
$\xi(x,U_{a_n})\geq a_n$ and $\xi(x,U)\leq\xi(y,U)+a_n$ for all $y\in U_{a_n}$. Since $\xi(x,U_{a_n})\geq a_n$, $U_{a_n}\cap C'\neq\varnothing$ (otherwise we would have $\rho_\xi(x,C')\geq\rho_\xi(x,C')+1/n$). Hence, $\xi(x,U)\leq\xi(z,U)+a_n$ for every $z\in U_{a_n}\cap C'$, which yields
$\xi(x,U)\leq\overline{\rho_\xi}(C',C)+\rho_\xi(x,C')+1/n$ for all $n\geq m$ and  $U\in\mathcal B$ with $U\cap C=\varnothing$. Therefore,
$\rho_\xi(x,C)\leq\rho_\xi(x,C')+\overline{\rho_\xi}(C',C)$.
\end{proof}

\begin{lem}\label{lem2}
Let $\rho$ be a (regular) normed quasi $\kappa$-metric on $X$ and $\mathcal B$ be the family of all regularly open subsets of $X$. Then the formula $\xi(x,U)=\sup\{\rho(x,C):C\cup U=X\}$ defines a (regular) $\pi$-capacity on $X$.
\end{lem}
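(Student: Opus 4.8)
The plan is to verify the five capacity axioms E1)--E5) for the function $\xi(x,U)=\sup\{\rho(x,C):C\cup U=X\}$ (with the understanding that the supremum is $0$ when $x\notin U$), using that $\rho$ is a normed quasi $\kappa$-metric. This is essentially the construction dual to Lemma~\ref{lem1}, so I expect the proof to mirror the arguments in \cite[Chapter 3]{sc1} (Shchepin's correspondence between $\kappa$-metrics and capacities), adapted to the weaker axiom $K1)^*$ in place of $K1)$. First I would observe that if $x\notin U$, then $U$ is a proper regularly open set with complement $C_0 = \overline{X\setminus U}$ containing $x$; any $C$ with $C\cup U = X$ satisfies $C \supseteq X\setminus U$, hence $C \supseteq C_0 \ni x$, so $\rho(x,C)=0$ for all such $C$ by $K1)^*$ (a point of $C$ has $\rho$-value zero), giving $\xi(x,U)=0$; and since $\rho$ is normed, $0\le\xi(x,U)\le 1$ always. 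This gives E1).

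Next, for E2), fix $U\in\mathcal B$; I must show $\{x\in U:\xi(x,U)>0\}$ is dense in $U$. Let $C_0 = \overline{X\setminus U}$; then $C_0\cup U = X$ and $C_0$ is regularly closed, so $\xi(x,U)\ge\rho(x,C_0)$ for every $x$. By $K1)^*$ applied to $C_0$, there is a dense open subset $V$ of $X\setminus C_0 = \inte C_0{}^c$; since $U$ is regularly open, $U\subseteq X\setminus C_0$ and in fact $U$ is dense in its own closure while $X\setminus C_0 \supseteq U$, so $V\cap U$ is dense in $U$ and $\rho(x,C_0)>0$ for $x\in V\cap U$, whence $\xi(x,U)>0$ there. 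For E3), lower semicontinuity: if $\xi(x_0,U)>a$, pick $C$ with $C\cup U = X$ and $\rho(x_0,C)>a$; by $K3)$ (continuity of $\rho(\cdot,C)$), there is a neighbourhood $O_{x_0}$ on which $\rho(x,C)>a$, hence $\xi(x,U)\ge\rho(x,C)>a$ there.

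For E4) I would argue as follows. Suppose $U\colon T\to\mathcal B$, $\lambda\colon T\to X$, $\widetilde\lambda = \lim_{\mathcal F}\lambda(t)$ exists and $\lim_{\mathcal F}\xi(\lambda(t),U(t))>a>0$. Set $W = \overline{\lim}_{\mathcal F}U(t) = \bigcap_{F\in\mathcal F}\overline{\bigcup_{t\in F}U(t)}$, and let $\widetilde U$ be the regular interior $\inte\overline{\inte W}$ (an element of $\mathcal B$ contained in $W$). Put $\widetilde C = \overline{X\setminus\widetilde U}$, so $\widetilde C\cup\widetilde U = X$. The task reduces to showing $\rho(\widetilde\lambda,\widetilde C)\ge a$; combined with $\widetilde U\subseteq W$ this gives E4). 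For each $t$ on a set in $\mathcal F$ choose $C(t)$ with $C(t)\cup U(t)=X$ and $\rho(\lambda(t),C(t))>a$, i.e. $X\setminus U(t)\subseteq C(t)$. One then compares $\widetilde C$ with a suitable ``limit'' of the $C(t)$: since $X\setminus\widetilde U \subseteq X\setminus W$ roughly corresponds to $\overline{\lim}$ of the complements $X\setminus U(t)\subseteq C(t)$, and using $K2)$, $K3)$, $K4)$ together with the ultrafilter limit, one obtains $\rho(\widetilde\lambda,\widetilde C)\ge\lim_{\mathcal F}\rho(\lambda(t),C(t))\ge a$. Finally, if $\rho$ is regular (satisfies $K5)$), I would deduce E5): given $\xi(x,U)>a>0$, pick $C$ with $C\cup U = X$ and $\rho(x,C)>a$; apply $K5)$ in the form $\rho(x,C)\le\rho(x,C')+\overline\rho(C',C)$ to produce, via a standard argument choosing $C'$ so that $\overline\rho(C',C)$ is small, a regularly open $U_a$ with $\xi(x,U_a)\ge a$ and $\xi(y,U)\ge\xi(x,U)-a$ for $y\in U_a$ — translating Shchepin's $K5)\Rightarrow E5)$ passage.

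The main obstacle will be E4): matching the set-theoretic behaviour of the regular-closed-set arguments ($C\cup U = X$ for the capacity side versus $U\cap C = \varnothing$ for the $\kappa$-metric side) with the topological lim-sup operation $\overline{\lim}_{\mathcal F}$, and verifying carefully that the regularization $\widetilde U$ of $W$ genuinely satisfies $\widetilde U\subseteq\overline{\lim}_{\mathcal F}U(t)$ while still $\rho(\widetilde\lambda,\overline{X\setminus\widetilde U})\ge a$. Here the condition $K4)$ (behaviour of $\rho$ on increasing families of regularly closed sets) is what substitutes for a continuity-type argument, and lining it up with the ultrafilter limit is the delicate point. Everywhere else the weakening from $K1)$ to $K1)^*$ only affects E1)--E2), and there it helps rather than hurts, since $K1)^*$ directly supplies the dense open set demanded by E2).
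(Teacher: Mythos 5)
Your overall strategy---verifying E1)--E5) directly from the quasi $\kappa$-metric axioms---is exactly the paper's, and your treatments of E1), E2) and E3) are correct. For E2) you in fact use a slightly cleaner route: since $U$ is regularly open, $C_0=X\setminus U$ is regularly closed with $X\setminus C_0=U$, so $K1)^*$ applied to the single set $C_0$ already produces the required dense open subset of $U$; the paper instead takes the union of the dense sets $V_G$ over all regularly open $G\subsetneq U$. A fact you use implicitly and should state once, since it drives both E2) and E5): by $K2)$ the supremum defining $\xi(x,U)$ is attained at the minimal admissible set $C_U=X\setminus U$, so $\xi(\cdot,U)=\rho(\cdot,C_U)$ identically.

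The genuine gaps are in E4) and E5). For E4) the paper simply observes that the proof of \cite[Lemma 3]{sc2} uses only $K2)$--$K4)$ and hence transfers verbatim to quasi $\kappa$-metrics; your attempted sketch, by contrast, does not work as written. You propose $\widetilde U=\mathrm{Int}\,\overline{\mathrm{Int}\,W}$ with $W=\overline{\lim}_{\mathcal F}U(t)$ and then claim $\rho(\widetilde\lambda,X\setminus\widetilde U)\ge a$ follows from ``$K2)$, $K3)$, $K4)$ together with the ultrafilter limit''; but shrinking $\widetilde U$ enlarges $X\setminus\widetilde U$, and by $K2)$ this only decreases $\rho(\widetilde\lambda,X\setminus\widetilde U)$, while there is no containment between $X\setminus\widetilde U$ and the sets $C(t)$ that would let you pass the inequality through the limit. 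The entire content of E4) lies in a careful transfinite application of $K4)$, which your sketch omits; the honest move here is to do what the paper does and cite Shchepin's argument. For E5) you gesture at ``choosing $C'$ so that $\overline\rho(C',C)$ is small,'' which is not the right mechanism. The paper's construction is: set $W=\{y:\rho(y,C_U)>\rho(x,C_U)-a\}$ (open by $K3)$, contained in $U$, containing $x$), put $U_a=\mathrm{Int}\,\overline W$ and $C_{U_a}=X\setminus U_a$; then every point of $C_{U_a}\setminus C_U$ lies outside $W$, so $\overline\rho(C_{U_a},C_U)\le\rho(x,C_U)-a$, whence $K5)$ gives $\xi(x,U_a)=\rho(x,C_{U_a})\ge a$, and $U_a\subseteq\overline W$ gives $\xi(y,U)=\rho(y,C_U)\ge\xi(x,U)-a$ for $y\in U_a$. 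Note that $\overline\rho(C_{U_a},C_U)$ is not small in absolute terms---it is bounded by exactly $\rho(x,C_U)-a$, which is what the definition of $W$ is engineered to deliver.
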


\begin{proof}
It is easy to show that $\xi$ satisfies conditions $E1)$ and $E3)$. Condition $E4)$ was established in \cite[Lemma 3]{sc2} in the case $\rho$ is a $\kappa$-metric, but the same proof works for quasi $\kappa$-metrics as well. Let show condition $E2)$. We fix $U\in\mathcal B$ and consider the family
$\mathcal B_U=\{G\in\mathcal B:G\varsubsetneq U\}$. For every $G\in\mathcal B_U$ the set $V_G=\{x\in G:\rho(x,C_G)>0\}$ is open and non-empty, where $C_G=X\setminus G$. Hence, $V=\bigcup_{G\in\mathcal B_U}V_G$ is dense in $U$. Moreover, $\xi(x,U)\geq\rho(x,C_G)$ for every $G\in\mathcal B_U$ and $x\in V_G$ because $U\cup C_G=X$. So, $\xi(x,U)>0$ for all $x\in V$.

It remains to show that $\xi$ is regular, i.e. it satisfies $E5)$, provided $\rho$ is regular. Let $U\in\mathcal B$ and $\xi(x,U)>a>0$ for some $x\in U$. 
Since $C_U=X\setminus U$ is the smallest regularly closed subset $C$ of $X$ with $U\cup C=X$, we have $\xi(z,U)=\rho(z,C_U)$ for all $z\in X$. 
The set $W=\{y\in X:\rho(y,C_U)>\rho(x,C_U)-a\}$ is open and non-empty because $\rho(\cdot,C)$ is continuous and $x\in W$. Let $U_a\in\mathcal B$ be the set $U_a=\rm{Int}\overline W$ and $C_{U_a}=X\setminus U_a$. Observe that $W\subset U$, which implies $U_a\subset U$ and $C_U\subset C_{U_a}$. 
Then, by $K5)$, $\rho(x,C_U)\leq\rho(x,C_{U_a})+\overline\rho(C_{U_a},C_U)$. Consequently,  $\rho(x,C_{U_a})\geq\rho(x,C_U)-\overline\rho(C_{U_a},C_U)$.
On the other hand, $\overline\rho(C_{U_a},C_U)=\sup_{y\in C_{U_a}}\rho(y,C_U)=\sup_{y\in C_{U_a}\setminus C_U}\rho(y,C_U)$ because $\rho(y,C_U)=0$ for all $y\in C_U$.  Observe also that $\rho(y,C_U)\leq\rho(x,C_U)-a$ for all $y\in C_{U_a}\setminus C_U$. So, $\overline\rho(C_{U_a},C_U)\leq\rho(x,C_U)-a$, and
thus $\xi(x,U_a)=\rho(x,C_{U_a})\geq a$. 
Finally, $\xi(y,U)=\rho(y,C_U)\geq\rho(x,C_U)-a=\xi(x,U)-a$ for all $y\in U_a$ because $U_a\con\overline W$. Hence, $\xi$ satisfies $E5)$.
\end{proof}

Let consider the following condition, where $\rho(x,C)$ is a non-negative function with $C$ being a regularly closed subset of $X$:
\begin{itemize}
\item[K$1)^{**}$] For any regularly closed $C\subsetneqq X$ there is $y\not\in C$ with $\rho(y,C)>0$  and $\rho(x,C)=0$ for all $x\in C$.
\end{itemize}
\begin{remark}
Observe that in the previous lemma we actually proved the following more general statement: Suppose $\rho$ satisfies conditions $K1)^{**}$ and $K2) - K4)$, and
$\rho(x,C)\leq 1$ for all $x\in X$ and all regularly closed sets $C\con X$. Then $\xi(x,U)=\sup\{\rho(x,C):C\cup U=X\}$ defines a  $\pi$-capacity on $X$. Moreover, $\xi$ is regular if $\rho$ satisfies also $K5)$.
\end{remark}

\begin{cor}\label{cor1}
Suppose there is a function $\rho$ on $X$ satisfying conditions $K1)^{**}$ and $K2) - K4)$. Then there is a quasi $\kappa$-metric $d$ on $X$. Moreover, $d$ is regular if $\rho$ satisfies also condition $K5)$.
\end{cor}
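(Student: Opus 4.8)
The strategy is to renormalize $\rho$ so that it falls under the hypotheses of the Remark above, and then to apply that Remark followed by Lemma~\ref{lem1}. The only discrepancy with the Remark is that $\rho$ is not assumed bounded by $1$, so the first step is to compose it with a suitable function. Fix an increasing, concave homeomorphism $\phi\colon[0,\infty)\to[0,1)$ with $\phi(0)=0$ — for instance $\phi(t)=t/(1+t)$ — and set $\rho'(x,C)=\phi(\rho(x,C))$. Clearly $\rho'\le 1$. Since $\phi$ is increasing and $\phi(0)=0$, we have $\rho'(x,C)=0$ exactly when $\rho(x,C)=0$; hence $\rho'$ still satisfies $K1)^{**}$, and $\rho'(x,C)=0$ whenever $x\in C$. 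Monotonicity of $\phi$ carries $K2)$ for $\rho$ over to $K2)$ for $\rho'$, continuity of $\phi$ gives $K3)$, and, because a continuous increasing function commutes with infima of monotone nets, $K4)$ passes to $\rho'$:
\[
\rho'(x,\overline{\bigcup C_\alpha})=\phi\bigl(\inf_\alpha\rho(x,C_\alpha)\bigr)=\inf_\alpha\phi(\rho(x,C_\alpha))=\inf_\alpha\rho'(x,C_\alpha).
\]

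Next I would check that if $\rho$ satisfies $K5)$ then so does $\rho'$; this is the one point where concavity of $\phi$ matters. Concavity together with $\phi(0)=0$ makes $\phi$ subadditive, $\phi(a+b)\le\phi(a)+\phi(b)$ for all $a,b\ge 0$: writing $a=\frac{a}{a+b}(a+b)+\frac{b}{a+b}\cdot 0$ and applying concavity gives $\phi(a)\ge\frac{a}{a+b}\phi(a+b)$, and symmetrically $\phi(b)\ge\frac{b}{a+b}\phi(a+b)$; adding these yields the claim. Moreover, since $\phi$ is increasing and continuous, $\overline{\rho'}(C',C)=\sup_{y\in C'}\phi(\rho(y,C))=\phi\bigl(\overline\rho(C',C)\bigr)$. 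Applying $\phi$ to the inequality in $K5)$ for $\rho$ and using subadditivity together with the previous identity,
\[
\rho'(x,C)\le\phi\bigl(\rho(x,C')+\overline\rho(C',C)\bigr)\le\phi(\rho(x,C'))+\phi\bigl(\overline\rho(C',C)\bigr)=\rho'(x,C')+\overline{\rho'}(C',C).
\]

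Now $\rho'$ is a non-negative function bounded by $1$ satisfying $K1)^{**}$ and $K2)-K4)$, and also $K5)$ provided $\rho$ does. By the Remark above, taking $\mathcal B$ to be the base of all regularly open subsets of $X$, the formula $\xi(x,U)=\sup\{\rho'(x,C):C\cup U=X\}$ defines a $\pi$-capacity on $X$, which is regular if $\rho'$ (equivalently $\rho$) satisfies $K5)$. Finally, Lemma~\ref{lem1} applied to $\xi$ provides a quasi $\kappa$-metric $d=\rho_\xi$ on $X$, regular whenever $\xi$ is; this $d$ is the required function.

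The main obstacle is the second step: making sure that $K5)$ survives the renormalization. This is exactly why $\phi$ must be chosen concave (to obtain subadditivity) and why one needs that an increasing continuous $\phi$ commutes with suprema (so that $\overline{\rho'}$ equals $\phi\circ\overline\rho$ on the nose, rather than merely being bounded by it). Everything else is routine bookkeeping, the substantive content having already been absorbed into the Remark and into Lemma~\ref{lem1}.
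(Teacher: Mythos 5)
Your proof is correct and follows the same route as the paper: normalize $\rho$, invoke the Remark (the $K1)^{**}$ version of Lemma~\ref{lem2}) to get a $\pi$-capacity, then apply Lemma~\ref{lem1}. The paper simply says ``we can suppose that $\rho$ is normed''; your explicit concave renormalization $\phi(t)=t/(1+t)$ and the verification that $K5)$ survives it (via subadditivity of $\phi$ and its commuting with sups) is a careful justification of that step, not a different argument.
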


\begin{proof}
We can suppose that $\rho$ is normed. Then, by Lemma \ref{lem2}, there is a $\pi$-capacity $\xi$ on $X$. Finally, Lemma \ref{lem1}, implies the existence of a quasi $\kappa$-metric $d$ on $X$. Moreover, if $\rho$ satisfies condition $K5)$, then $\xi$ is regular, so is  $d$.
\end{proof}

\begin{thm}\label{thm1}
Any product of (regularly) quasi $\kappa$-metrizable spaces is (regularly) quasi $\kappa$-metrizable.
\end{thm}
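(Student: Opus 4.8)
The plan is to route everything through $\pi$-capacities. Write $X=\prod_{s\in S}X_s$ with each $X_s$ (regularly) quasi $\kappa$-metrizable; after normalizing the quasi $\kappa$-metrics on the factors, Lemma~\ref{lem2} supplies for each $s$ a (regular) $\pi$-capacity $\xi_s$ on the family $\mathcal B_s$ of all regularly open subsets of $X_s$ (so $X_s\in\mathcal B_s$). By Lemma~\ref{lem1} (equivalently Corollary~\ref{cor1}) it then suffices to build a (regular) $\pi$-capacity $\xi$ on $X$. For the base of $X$ I would use the family $\mathcal B$ of canonical regularly open boxes $U=\bigcap_{s\in F}\pi_s^{-1}(U_s)$ with $F\subset S$ finite, $U_s\in\mathcal B_s$ and $U_s\ne X_s$ for $s\in F$, together with $X$ itself; since only finitely many sides differ from $X_s$, each such $U$ is genuinely regularly open with a uniquely determined support $F$, and $\mathcal B$ is a base. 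The naive formula $\xi(x,U)=\min_{s\in F}\xi_s(x_s,U_s)$ does \emph{not} work: it already fails E4) on $[0,1]^{\mathbb{N}}$, because a sequence of boxes whose supports grow without bound can have its topological upper limit collapse to a set with empty interior. So I would build in a decay factor in the size of the support and set $\xi(x,X)=1$ and $\xi(x,U)=|F|^{-1}\min_{s\in F}\xi_s(x_s,U_s)$ when the support $F$ is nonempty.

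Conditions E1) and E3) are immediate (a finite minimum of lower semicontinuous functions pulled back along projections). For E2), $\{x\in U:\xi(x,U)>0\}$ equals $\prod_{s\in F}\{y\in U_s:\xi_s(y,U_s)>0\}\times\prod_{s\notin F}X_s$, a finite product of sets dense in the corresponding sides, hence dense in $U$. For E5), if $\xi(x,U)>a>0$ then $\xi_s(x_s,U_s)>|F|a$ for every $s\in F$; feeding the level $|F|a$ into E5) for each $\xi_s$ produces $U_s'\in\mathcal B_s$, and $U_a=\bigcap_{s\in F}\pi_s^{-1}(U_s')$ satisfies $\xi(x,U_a)\ge a$ and $\xi(y,U)\ge\xi(x,U)-a$ for $y\in U_a$ because $\min$ respects these uniform estimates (and the case $U=X$ is handled by $U_a=X$). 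Thus the construction preserves regularity, so the ``regularly'' clause of the theorem will follow for free once E4) is established.

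The crux is E4). Suppose $U\colon T\to\mathcal B$, $\lambda\colon T\to X$, an ultrafilter $\mathcal F$ on $T$, $\widetilde\lambda=\lim_{\mathcal F}\lambda(t)$, and $\lim_{\mathcal F}\xi(\lambda(t),U(t))>a'>a>0$; write $F(t)$ for the support of $U(t)$ and set $U_s(t)=X_s$ for $s\notin F(t)$. Since $\xi(\lambda(t),U(t))\le|F(t)|^{-1}$, the function $t\mapsto|F(t)|$ cannot converge to infinity along $\mathcal F$, and an $\mathbb{N}$-valued function is, modulo an ultrafilter, either eventually constant or convergent to infinity; so after shrinking $T$ to a member of $\mathcal F$ we may assume $|F(t)|\equiv n$. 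Then the ``essential'' set $S_*=\{s:\{t:s\in F(t)\}\in\mathcal F\}$ has at most $n$ elements, since any $s_1,\dots,s_{n+1}\in S_*$ would force, for some $t$, that $F(t)$ contains $n+1$ distinct coordinates. For each of the finitely many $r\in S_*$ we have $\lambda(t)_r\to\widetilde\lambda_r$ and $\xi_r(\lambda(t)_r,U_r(t))\ge n\,\xi(\lambda(t),U(t))>na'$ on a member of $\mathcal F$, so E4) applied to $\xi_r$ yields $\widetilde U_r\in\mathcal B_r$ with $\widetilde U_r\subset\overline{\lim}_{\mathcal F}U_r(t)$ and $\xi_r(\widetilde\lambda_r,\widetilde U_r)>na$; put $\widetilde U_s=X_s$ for $s\notin S_*$ and $\widetilde U=\bigcap_{r\in S_*}\pi_r^{-1}(\widetilde U_r)\in\mathcal B$. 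Its support $\widetilde F\subseteq S_*$ has $|\widetilde F|\le n$, whence $\xi(\widetilde\lambda,\widetilde U)=|\widetilde F|^{-1}\min_{r\in\widetilde F}\xi_r(\widetilde\lambda_r,\widetilde U_r)>|\widetilde F|^{-1}\cdot na\ge a$ (and $\xi(\widetilde\lambda,\widetilde U)=1>a$ if $\widetilde F=\varnothing$).

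The one genuinely delicate point is the inclusion $\widetilde U\subset\overline{\lim}_{\mathcal F}U(t)$. Fix $x\in\widetilde U$, a member $F\in\mathcal F$, and a basic neighbourhood $W=\bigcap_{s\in G}\pi_s^{-1}(W_s)$ of $x$ with $G$ finite; we must find $t\in F$ with $W\cap U(t)\ne\varnothing$. For $r\in S_*\cap G$ the open set $W_r\cap\widetilde U_r$ is nonempty and, since $\widetilde U_r\subset\overline{\bigcup_{t'\in F'}U_r(t')}$ for every $F'\in\mathcal F$, it meets $\bigcup_{t'\in F'}U_r(t')$; hence $E_r=\{t':W_r\cap U_r(t')\ne\varnothing\}$ meets every member of $\mathcal F$, so $E_r\in\mathcal F$. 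Also $\{t:s\notin F(t)\}\in\mathcal F$ for $s\in G\setminus S_*$. Intersecting $F$, the finitely many $E_r$, these finitely many sets, and $\{t:\xi(\lambda(t),U(t))>a'\}$ gives a single $t\in F$ for which $F(t)\cap G\subseteq S_*\cap G$, each $U_s(t)$ is nonempty, and $W_s\cap U_s(t)\ne\varnothing$ whenever $s\in F(t)\cap G$; choosing $y_s$ in $W_s\cap U_s(t)$, in $U_s(t)$, in $W_s$, or arbitrary according to whether $s$ lies in $F(t)\cap G$, in $F(t)\setminus G$, in $G\setminus F(t)$, or in neither, produces $y\in W\cap U(t)$. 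Thus $x\in\overline{\lim}_{\mathcal F}U(t)$, E4) holds, and Lemma~\ref{lem1} delivers the desired (regular) quasi $\kappa$-metric on $X$. The main obstacle is precisely this verification of E4): one has to control simultaneously, along the ultrafilter, the combinatorics of the varying finite supports and the topology of upper limits of boxes in an infinite product, and it is the decay factor that collapses the support sizes to a single integer and reduces the problem to finitely many applications of E4) on the factors, in the spirit of Shchepin's treatment of capacities on products in \cite{sc2}.
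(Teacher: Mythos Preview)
Your proof is correct and follows essentially the same route as the paper: normalize the quasi $\kappa$-metrics on the factors, pass to (regular) $\pi$-capacities via Lemma~\ref{lem2}, build a product $\pi$-capacity by the formula $\xi(x,U)=|F|^{-1}\min_{s\in F}\xi_s(x_s,U_s)$ (exactly the paper's formula), and then invoke Lemma~\ref{lem1}. The only difference is that the paper cites Shchepin \cite{sc1,sc2} for conditions E3)--E5), whereas you supply a self-contained verification of E4) (the bounded-support argument via $|S_*|\le n$ and the coordinatewise construction of a witness $y\in W\cap U(t)$); this is precisely Shchepin's argument spelled out, so the two proofs coincide in substance.
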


\begin{proof}
Suppose $X=\prod_{\alpha\in A}X_\alpha$ and for every $\alpha$ there is a normed (regular) quasi $\kappa$-metric $\rho_\alpha$ on $X$. Following the proof of \cite[Theorem 2]{sc2}, for every $\alpha$ let $\mathcal T_\alpha$ be the family of all regularly open subsets of $X_\alpha$ and let $\mathcal B$ be the standard base for $X$ consisting of sets of the form $U=\bigcap_{i=1}^{n}\pi_{\alpha_i}^{-1}(U_i)$ with $U_i\in\mathcal T_{\alpha_i}$ and $U_i\neq X_{\alpha_i}$, where
$\pi_\alpha:X\to X_\alpha$ is the projection. Denote by $v(U)$ the collection $\{\alpha_1,...,\alpha_n\}$. According to Lemma \ref{lem2}, for every $\alpha$ there exists a (regular) $\pi$-capacity $\xi_\alpha$ on $X_\alpha$. Consider the function $\xi:X\times\mathcal B\to\mathbb R$ defined by
$\displaystyle\xi(x,U)=\inf_{\alpha\in v(U)}\xi_\alpha(\pi_\alpha(x),\pi_\alpha(U))/|v(U)|$. Obviously, condition $E1)$ is satisfied. Moreover, since for each $\alpha_i$ the set $W_i=\{z\in X_{\alpha_i}:\xi_i(z,U_i)>0\}$ is open and dense in $U_i$, the set $W=\bigcap_{i=1}^{n}\pi_{\alpha_i}^{-1}(W_i)$ is open and dense in $U$. 
So, $\xi$ satisfies condition $E2)$. Shchepin has shown that the function $\xi$ is a (regular) capacity provide each $\xi_\alpha$ is so, see the proof of \cite[Theorem 15]{sc1} and \cite[Theorem 2]{sc2}. The same arguments show that  $\xi$ also satisfies conditions $E3) - E4)$, and condition $E5)$ in case each $\xi_\alpha$ is regular. Therefore, $\xi$ is a (regular) $\pi$-capacity. Finally, by Lemma \ref{lem1}, there exists a (regular) quasi $\kappa$-metric on $X$.
\end{proof}


\section{Some more properties of quasi $\kappa$-metrizable spaces}

\begin{pro}
Let $X$ be a quasi $\kappa$-metrizable space and $Y\con X$. The $Y$ is also quasi $\kappa$-metrizable in each of the following cases: (i) $Y$ is dense in $X$; (ii) $Y$ is regularly closed in $X$; (iii) $Y$ is open in $X$.
\end{pro}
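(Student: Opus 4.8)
Here is how I would prove the proposition.

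\medskip

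\noindent\emph{Plan of proof.} I would treat the three cases by a single construction. Fix a normed quasi $\kappa$-metric $\rho$ on $X$, and for a regularly closed set $D\subset Y$ put $\widehat D:=\overline{\inte_Y D}^{X}$, the closure in $X$ of the $Y$-interior of $D$; then set $\rho_Y(y,D):=\rho(y,\widehat D)$ for $y\in Y$. The statement will follow once the assignment $D\mapsto\widehat D$ is shown to have the four properties: (P1) $\widehat D$ is regularly closed in $X$; (P2) $\widehat D\cap Y=D$, so $D\subset\widehat D$ and $\widehat D\subsetneqq X$ whenever $D\subsetneqq Y$; (P3) $D_1\subset D_2$ implies $\widehat{D_1}\subset\widehat{D_2}$; and (P4) if $\{D_\alpha\}$ is an increasing transfinite family of regularly closed subsets of $Y$ and $E=\overline{\bigcup_\alpha D_\alpha}^{Y}$, then $E$ is regularly closed in $Y$ and $\widehat E=\overline{\bigcup_\alpha\widehat{D_\alpha}}^{X}$. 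Here (P3) is immediate, and (P2) rests only on the identity $\overline{A}^{X}\cap Y=\overline{A}^{Y}$ for $A\subset Y$: if $Y$ is closed it gives $\widehat D\cap Y=\overline{\inte_Y D}^{Y}=D$ at once, while if $Y$ is dense one observes $\widehat D=\overline{\inte_Y D}^{X}=\overline{\overline{\inte_Y D}^{Y}}^{X}=\overline{D}^{X}$ and then $\overline{D}^{X}\cap Y=\overline{D}^{Y}=D$.

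The substantive points are (P1) and (P4). For (P4), writing each $D_\alpha=\overline{\inte_Y D_\alpha}^{Y}$ gives $E=\overline{G}^{Y}$ with $G:=\bigcup_\alpha\inte_Y D_\alpha$ open in $Y$, and then $\widehat E=\overline{\inte_Y\overline{G}^{Y}}^{X}=\overline{G}^{X}=\overline{\bigcup_\alpha\widehat{D_\alpha}}^{X}$, using the elementary facts $\overline{\inte_Y\overline{G}^{Y}}^{X}=\overline{G}^{X}$ for open $G\subset Y$ (since $G\subset\inte_Y\overline{G}^{Y}\subset\overline{G}^{Y}=\overline{G}^{X}\cap Y$) and $\overline{\bigcup_\alpha\overline{A_\alpha}^{X}}^{X}=\overline{\bigcup_\alpha A_\alpha}^{X}$; thus (P4) is uniform across the three cases. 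Property (P1) is where the hypothesis on $Y$ is genuinely used. Write $\inte_Y D=O\cap Y$ with $O$ open in $X$. If $Y$ is open, then $\inte_Y D$ is already open in $X$ and $\widehat D$ is the closure of an open set. If $Y$ is dense, then $\widehat D=\overline{O\cap Y}^{X}=\overline{O}^{X}$, again the closure of an open set. If $Y$ is regularly closed, put $W:=\inte_X Y$, so $Y=\overline{W}^{X}$; since $O$ is open, $\inte_Y D=O\cap\overline{W}^{X}\subset\overline{O\cap W}^{X}$, while $O\cap W\subset O\cap Y=\inte_Y D$, so $\widehat D=\overline{O\cap W}^{X}$, the closure of the open set $O\cap W$. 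In each case a closure of an open set is regularly closed, which is (P1).

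Granting (P1)--(P4), the axioms for $\rho_Y$ follow quickly: K2) is (P3) with K2) for $\rho$; K3) holds because $\rho_Y(\cdot,D)$ is the restriction to $Y$ of the continuous function $\rho(\cdot,\widehat D)$; and K4) follows from (P4) and K4) for $\rho$ applied to the increasing family $\{\widehat{D_\alpha}\}$ of regularly closed subsets of $X$. For K$1)^*$, let $D\subsetneqq Y$ be regularly closed; by (P1)--(P2), $\widehat D$ is a proper regularly closed subset of $X$, so K$1)^*$ for $\rho$ gives a dense open $V\subset X\setminus\widehat D$ with $\rho(x,\widehat D)>0\iff x\in V$. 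Then $V_Y:=V\cap Y$ is open in $Y$, lies in $Y\setminus D=Y\cap(X\setminus\widehat D)$, and $\rho_Y(y,D)>0\iff y\in V_Y$ for $y\in Y$; it remains to see $V_Y$ is dense in $Y\setminus D$. This is clear when $Y$ is open ($Y\setminus D$ is then open in $X$), it follows from density of $Y$ in $X$ when $Y$ is dense, and when $Y$ is regularly closed it holds because any nonempty open subset of $X$ meeting $Y$ must meet $\inte_X Y$, so every nonempty relatively open piece of $Y\setminus D$ contains a nonempty open subset of $X\setminus\widehat D$, which $V$ then meets. Alternatively one may verify only K$1)^{**}$ for $\rho_Y$ — a single witness $y\in Y\setminus D$ with $\rho_Y(y,D)>0$, obtained by intersecting $V$ with a suitable nonempty open subset of $X$ contained in $Y\setminus D$ — and then invoke Corollary~\ref{cor1}. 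The only delicate steps are (P1) and the density of $V_Y$; both are short point-set arguments of "the closure of an open set is regularly closed" type, and they are precisely the places where the distinction among the three hypotheses on $Y$ enters, everything else transferring formally from the axioms for $\rho$. (Regularity, axiom K5), is not asserted, so no compatibility with $\overline\rho$ needs to be tracked.)
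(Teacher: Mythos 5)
Your proof is correct and follows essentially the same route as the paper: transfer a regularly closed $D\subset Y$ to the regularly closed set $\overline{\inte_Y D}^{X}$ (which in the dense case is $\overline{D}^{X}$ and in the regularly closed case is $D$ itself) and pull back $\rho$. The only cosmetic difference is that the paper deduces case (iii) by composing (ii) with (i) (an open set is dense in its closure, which is regularly closed), whereas you handle it directly; otherwise your argument is the paper's one-line proof written out in full detail.
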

\begin{proof}
If $\rho$ is a quasi $\kappa$-metric on $X$ and $Y\con X$ is dense, the equality $d(y,\overline U^Y)=\rho(y,\overline U^X)$, where $U\con Y$ is open, defines
a quasi $\kappa$-metric on $Y$. The second case follows from the observation that every regularly closed subset of $Y$ is also regularly closed in $X$. The third case is a consequence of the first two because every open subset of $X$ is dense in its closure.
\end{proof}

Let consider the following condition.
\begin{itemize}
\item[K$4)^*$] $\rho(x,\overline{\bigcup C_n})=\inf_{n}\rho(x,C_n)$ for every increasing sequence $\{C_n\}$ of regularly closed sets in $X$.
\end{itemize}
\begin{lem}\label{lem3}
Suppose $X$ is a space admitting a non-negative function $\rho(x,C)$ satisfying conditions $K1)^{*}$, $K2)$, $K3)$ and $K4)^*$. Then $X$ is quasi $\kappa$-metrizable provided $X$ has countable cellularity. In particular, every compact space admitting such a function $\rho$ is quasi $\kappa$-metrizable.
\end{lem}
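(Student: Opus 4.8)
I would first normalize: we may assume $\rho(x,C)\le 1$ for all $x$ and all regularly closed $C$, so $\rho$ is a normed function satisfying $K1)^*,K2),K3),K4)^*$. The goal is then to produce a (normed) quasi $\kappa$-metric on $X$; since $\rho$ already satisfies $K1)^*,K2),K3)$, it suffices to upgrade $K4)^*$ to $K4)$ — or, failing that, to build a $\pi$-capacity from $\rho$ and invoke Lemma \ref{lem1}.

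\emph{The ccc case (the substantive step).} Suppose $X$ has countable cellularity. I claim $\rho$ itself satisfies $K4)$. Let $\{C_\alpha\}$ be an increasing transfinite family of regularly closed sets, put $U_\alpha=\mathrm{Int}\,C_\alpha$, so $C_\alpha=\overline{U_\alpha}$, and set $D=\overline{\bigcup_\alpha C_\alpha}=\overline{\bigcup_\alpha U_\alpha}$. In a ccc space an increasing family of open sets admits a countable subfamily with dense union: by Zorn's lemma choose a maximal disjoint family $\{V_k\}_{k\in\omega}$ of nonempty open sets each contained in some $U_{\alpha_k}$; maximality forces $\bigcup_k V_k$, hence $\bigcup_k U_{\alpha_k}$, to be dense in $\bigcup_\alpha U_\alpha$, and replacing $\alpha_k$ by $\max(\alpha_0,\dots,\alpha_k)$ we obtain an increasing sequence $\{\beta_n\}$ with $\overline{\bigcup_n U_{\beta_n}}=\overline{\bigcup_\alpha U_\alpha}$, whence (using $U_{\beta_n}\subset C_{\beta_n}=\overline{U_{\beta_n}}$) also $\overline{\bigcup_n C_{\beta_n}}=D$. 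Now apply $K4)^*$ to the increasing sequence $\{C_{\beta_n}\}$: $\rho(x,D)=\inf_n\rho(x,C_{\beta_n})\ge\inf_\alpha\rho(x,C_\alpha)$; and $K2)$ gives $\rho(x,D)\le\rho(x,C_\alpha)$ for every $\alpha$, so $\rho(x,D)=\inf_\alpha\rho(x,C_\alpha)$, which is $K4)$. Hence $\rho$ is a quasi $\kappa$-metric and $X$ is quasi $\kappa$-metrizable.

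\emph{The compact case.} Here the Boolean‑algebra obstruction (sups in $RO(X)$ need not be countable) means one cannot expect $K4)^*\Rightarrow K4)$ for $\rho$ directly; instead I would build a $\pi$-capacity. Let $\mathcal{B}$ be the regularly open sets and set $\xi(x,U)=\sup\{\rho(x,C):C\cup U=X\}$, as in Lemma \ref{lem2}. Conditions $E1)$ and $E3)$ follow from the vanishing of $\rho$ on $C$ (a consequence of $K1)^*$) and from $K3)$, exactly as in Lemma \ref{lem2}; $E2)$ follows from $K1)^*$ as there, since for every regularly open $G\subsetneq U$ the set $\{x:\rho(x,X\setminus G)>0\}$ is open and dense in $G$ and lies below $\xi(\cdot,U)$ (because $(X\setminus G)\cup U=X$), the isolated‑point case being immediate. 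The only place the proof of \cite[Lemma 3]{sc2} used the transfinite strength of $K4)$ is condition $E4)$; but for compact $X$ every $\mathcal{F}$-limit exists and $\overline{\lim}_{\mathcal{F}}U(t)$ is a decreasing intersection of closed sets enjoying the finite intersection property, so Shchepin's construction of the limiting regularly closed set uses only a \emph{sequence} of approximants and hence only $K4)^*$. With $\xi$ a (regular) $\pi$-capacity, Lemma \ref{lem1} yields a (regular) quasi $\kappa$-metric on $X$.

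\emph{Main obstacle.} Everything above except the last claim is routine; the delicate point is precisely the verification of $E4)$ for $\xi$ in the compact case from $K4)^*$ alone — i.e. checking that in Shchepin's argument the transfinite appeal to $K4)$ can genuinely be replaced, by compactness, with the sequential $K4)^*$. (For ccc spaces this issue disappears, since ccc already upgrades $\rho$ to satisfy full $K4)$.)
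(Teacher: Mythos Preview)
Your ccc argument is correct and essentially identical to the paper's: one extracts a countable increasing subfamily $\{C_{\beta_n}\}$ with the same closure, applies $K4)^*$, and compares with $K2)$ to obtain $K4)$ for $\rho$ itself.

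The difference is in the compact case. You try to bypass the ccc reduction by building a $\pi$-capacity directly from $\rho$ and arguing that compactness allows $E4)$ to be verified from $K4)^*$ alone, but you never actually carry this out; you only assert that ``Shchepin's construction \dots\ uses only a sequence of approximants'' and flag it as the main obstacle. This is the genuine gap: the ultrafilter $\mathcal F$ in $E4)$ lives on an arbitrary index set $T$, and $\overline{\lim}_{\mathcal F}U(t)=\bigcap_{F\in\mathcal F}\overline{\bigcup_{t\in F}U(t)}$ is a (generally uncountable) directed intersection; nothing in your sketch explains why compactness of $X$ reduces the relevant application of $K4)$ to a countable sequence. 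As written, the compact case is not proved.

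The paper takes a much shorter route that avoids this difficulty entirely: it invokes \cite[Theorem~1.4]{v}, which shows that any compact space admitting a function satisfying $K1)^*$, $K2)$, $K3)$, $K4)^*$ is skeletally generated, hence has countable cellularity. One is then back in the ccc case already handled, and $\rho$ itself is a quasi $\kappa$-metric. So your premise that ``one cannot expect $K4)^*\Rightarrow K4)$ for $\rho$ directly'' in the compact case is precisely what the paper refutes, via the detour through skeletal generation.
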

\begin{proof}
It suffices to show that $\rho$ satisfies condition $K4)$ in case $X$ is of countable cellularity, and this follows from \cite[Proposition 2.1]{ku1}. For reader's convenience we provide a proof. Let $\{C_\alpha\}_\alpha$ be an increasing transfinite family of regularly closed sets in $X$. Then $\overline{\bigcup_\alpha C_\alpha}=\overline{\bigcup_\alpha U_\alpha}$ and $\{U_\alpha\}_\alpha$ is also increasing, where $U_\alpha$ is the interior of $C_\alpha$.  Since $X$ has countable cellularity, there are countably many $\alpha_i$ such that
$\bigcup_{i\geq 1}U_{\alpha_i}$ is dense in $\bigcup_\alpha U_\alpha$. We can assume that the sequence $\{\alpha_i\}$ is increasing, so is the sequence $\{U_{\alpha_i}\}$. Because $\rho$ satisfies condition $K4)^*$, we have $\rho(x,\overline{\bigcup C_{\alpha_i}})=\inf_{i}\rho(x,C_{\alpha_i})$. This implies that $\rho(x,\overline{\bigcup C_\alpha})=\inf_{\alpha}\rho(x,C_\alpha)$. Indeed, since $\overline{\bigcup C_{\alpha_i}}=\overline{\bigcup C_{\alpha}}$,
$\inf_{\alpha}\rho(x,C_\alpha)<\inf_{i}\rho(x,C_{\alpha_i})$ for some $x\in X$ would implies the existence of $\alpha_0$ with $\rho(x,C_{\alpha_0})<\inf_{i}\rho(x,C_{\alpha_i})$ for all $i$. Because any two elements of the family $\{C_\alpha\}_\alpha$ are comparable with respect to inclusion, the last inequality means that $C_{\alpha_0}$ contains all $C_{\alpha_i}$. Hence, $C_{\alpha_0}=\overline{\bigcup_\alpha C_\alpha}$ and
$\rho(x,C_{\alpha_0})$ would be equal to $\inf_{i}\rho(x,C_{\alpha_i})$, a contradiction.

It was shown in \cite[Theorem 1.4]{v} that every compact space $X$ admitting a non-negative function $\rho(x,C)$ satisfying conditions $K1)^{*}$, $K2)$, $K3)$ and $K4)^*$ is skeletally generated, and hence $X$ has countable cellularity. Therefore, any such compactum is quasi $\kappa$-metrizable.
\end{proof}
It was shown by Chigogidze \cite{chi} that the \v{C}ech-Stone compactification of every pseudocompact $\kappa$-metrizable space is $\kappa$-metrizable.
We have a similar result for quasi $\kappa$-metrizable spaces.

\begin{thm}\label{thm2}
If $X$ is a pseudocompact (regularly) quasi $\kappa$-metrizable space, then $\beta X$ is (regularly) quasi $\kappa$-metrizable.
\end{thm}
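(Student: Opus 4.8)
The plan is to transfer a quasi $\kappa$-metric structure from $X$ to $\beta X$ by working through the $\pi$-capacity reformulation, which is precisely what the equivalence between Lemma~\ref{lem1} and Lemma~\ref{lem2} is for. First I would fix a normed (regular) quasi $\kappa$-metric $\rho$ on $X$; by Lemma~\ref{lem2} this yields a (regular) $\pi$-capacity $\xi$ on $X$, defined on the base $\mathcal B_X$ of all regularly open subsets of $X$. The goal is to build a (regular) $\pi$-capacity $\widetilde\xi$ on $\beta X$ on a suitable base $\mathcal B_{\beta X}$ of regularly open sets, and then invoke Lemma~\ref{lem1} to conclude that $\beta X$ carries a (regular) quasi $\kappa$-metric, i.e.\ is (regularly) quasi $\kappa$-metrizable. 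The pseudocompactness of $X$ enters through the standard fact that $X$ is $C^*$-embedded and $G_\delta$-dense in $\beta X$, so that $\cl_{\beta X}U\cap X=\cl_X(U\cap X)$ for regularly open $U\subset\beta X$ and the assignment $U\mapsto U\cap X$ is a bijection between regularly open subsets of $\beta X$ and regularly open subsets of $X$ that preserves inclusions, unions (up to regular-open hull), and density of subsets.

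The key steps in order: (1)~set $\mathcal B_{\beta X}=\{U\subset\beta X: U\text{ regularly open}, U\neq\beta X\}$ and record the bijection $U\leftrightarrow U\cap X=:U_0$ with $X$-regularly-open sets; (2)~define $\widetilde\xi(z,U)=\xi(\text{something})$ — here one must be careful, since $z$ may lie in $\beta X\setminus X$. The natural device is to set $\widetilde\xi(z,U)$ to be the limit of $\xi$ along an ultrafilter: pick any ultrafilter $\mathcal F$ on $U_0$ converging to $z$ in $\beta X$, and put $\widetilde\xi(z,U)=\lim_{\mathcal F}\xi(\lambda(t),U_0)$ for an appropriate net $\lambda$, or more simply define $\widetilde\xi(z,U)=\sup\{\rho^\beta(z,C):C\cup U=\beta X\}$ after first transporting $\rho$ itself to $\beta X$ via $\rho^\beta(z,C)=\sup\{\rho(x,C\cap X): x\in X, x\notin \cl_{\beta X}(U)\}$-type formulas. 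I expect the cleaner route is to define $\widetilde\xi$ directly by $\widetilde\xi(z,U)=\inf\{\sup_{x\in V\cap X}\xi(x,U_0): V\text{ a regularly open neighborhood of }z\}$, i.e.\ the lower-semicontinuous regularization forced on us by the nonmetrizable points; (3)~verify E1)–E4) for $\widetilde\xi$: E1) is immediate from the corresponding property of $\xi$ and the density of $X$; E2) follows because $\{x\in U_0:\xi(x,U_0)>0\}$ is dense in $U_0$, hence dense in $U$, and these points also satisfy $\widetilde\xi(\cdot,U)>0$; E3) holds essentially by construction of $\widetilde\xi$ as a lower regularization; E4) is the delicate one — it requires commuting the ultrafilter limit defining $\widetilde\xi$ with the ultrafilter limit in the statement of E4), which is where one uses that $X$ is $G_\delta$-dense and that $\overline{\lim}_{\mathcal F}$ of regularly open sets behaves well under the $X\leftrightarrow\beta X$ correspondence; (4)~if $\rho$ is regular, verify E5) for $\widetilde\xi$, transferring the witness $U_a\in\mathcal B_X$ to its regular-open hull in $\beta X$; (5)~apply Lemma~\ref{lem1} to $\widetilde\xi$.

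The main obstacle will be step (3)'s verification of E4), and more precisely making the definition in step (2) coherent: one must choose the transfer of $\xi$ to $\beta X$ so that it is genuinely lower semi-continuous at the added points while still agreeing (densely) with $\xi$ on $X$, and so that the ultrafilter-limit condition E4) — which mixes an external set $T$ with an ultrafilter $\mathcal F$ — survives. The safest strategy is to factor everything through Shchepin's machinery: note that E4) for $\xi$ on $X$ already gives us, for ultrafilter-indexed families landing in $X$, a limit capacity value; then pass to $\beta X$ by observing that any ultrafilter-limit computation in $\beta X$ can be replaced by one in $X$ because $X$ is dense and $C^*$-embedded (so ultrafilters on subsets of $X$ still converge in $\beta X$, and the relevant $\overline{\lim}_{\mathcal F}U(t)$ of $\beta X$-regularly-open sets is the $\beta X$-hull of the corresponding $\overline{\lim}$ computed inside $X$). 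Once E4) is reduced to the already-known E4) for $\xi$, the remaining axioms are routine, and the (regular) case adds only the bookkeeping of transporting the set $U_a$ witnessing E5) across the bijection. A secondary point to check carefully is that $\widetilde\xi$ takes values in $[0,1]$, which is automatic since $\rho$ was normed and the transfer formulas only take sups and infs of values already in $[0,1]$.
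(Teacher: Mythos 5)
Your overall strategy --- push the quasi $\kappa$-metric through Lemma~\ref{lem2} to a $\pi$-capacity on $X$, extend the capacity to $\beta X$, and come back via Lemma~\ref{lem1} --- is genuinely different from the paper's, and it runs into a real obstruction that your write-up acknowledges but does not overcome. The problem is that a $\pi$-capacity is only \emph{lower semi-continuous} in $x$ (axiom E3)), so it has no canonical extension over $\beta X$; you are forced to choose a regularization, and neither choice works cleanly. The formula you call the ``lower-semicontinuous regularization,'' $\widetilde\xi(z,U)=\inf_V\sup_{x\in V\cap X}\xi(x,U_0)$, is in fact the \emph{upper} semicontinuous envelope, and with it E1) is already in danger: for $z$ on the boundary of $U$ every neighborhood $V$ meets the dense positivity set of $\xi(\cdot,U_0)$ in $U_0$, so there is no reason for the infimum to vanish at $z\notin U$. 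The genuinely lsc envelope $\sup_V\inf_{x\in V\cap X}\xi(x,U_0)$ does give E1)--E3), but then E4) --- which you correctly single out as the delicate step --- is left as a double interchange of an arbitrary ultrafilter limit with a sup-inf over neighborhoods of points of $\beta X\setminus X$, and nothing in your sketch carries it out. Note also that the only thing pseudocompactness buys beyond $C^*$-embedding is $G_\delta$-density of $X$ in $\beta X$, i.e.\ control over \emph{countable} intersections of open sets; E4) is indexed by an arbitrary set $T$ with an arbitrary ultrafilter, so $G_\delta$-density has no visible point of entry in your argument. You never actually identify the step where pseudocompactness is used, which is a sign the proof is not yet a proof.

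The paper avoids both difficulties at once. It extends $\rho$ itself rather than the capacity: $f_W(x)=\rho(x,\overline{W\cap X}^X)$ is \emph{continuous} and bounded on $X$, hence extends uniquely over $\beta X$ by $C^*$-embedding, with no regularization ambiguity; K1)$^*$, K2), K3) then come out directly. The transfinite axiom K4) is never verified on $\beta X$ directly. Instead only the countable version K4)$^*$ is checked, and this is exactly where pseudocompactness enters: the bad set $V=\bigcap_{n\ge 1}V_0\cap V_n$ is a countable intersection of neighborhoods, so $G_\delta$-density forces $V\cap X\neq\varnothing$ and a contradiction with K4) for $\rho$ on $X$. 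The upgrade from K4)$^*$ to K4) is then delegated to Lemma~\ref{lem3}, which rests on the fact that a compactum carrying such a function is skeletally generated and hence has countable cellularity, so every increasing transfinite family of regularly closed sets is cofinally captured by a countable subfamily. Some reduction of this kind --- from arbitrary families to countable ones --- is indispensable, because density of $X$ in $\beta X$ can only ever control countably many conditions at a time; your proposal has no analogue of it for E4), and that is the gap you would need to fill to make the capacity route work.
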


\begin{proof}
Suppose $\rho(x,C)$ is a quasi $\kappa$-metric on $X$. We can assume that $\rho(x,\overline U^X)\leq 1$ for all $x\in X$ and all open $U\subset X$ ($\overline U^X$ denotes the closure of $U$ in $X$). For every open $W\subset\beta X$ consider the function $f_W$ on $X$ defined by $f_W(x)=\rho(x,\overline{W\cap X}^X)$. Let $\widetilde f_W:\beta X\to\mathbb R$ be the continuous extension of $f_W$, and define $d(y,\overline W)=\widetilde f_W(y)$, $y\in\beta X$. Obviously, $d(y,\overline W)=0$ if $y\in W\cap X$. Since $W\cap X$ is dense in $\overline W$, $d(y,\overline W)=0$ for all $y\in\overline W$. Moreover, if $\overline W\neq\beta X$, then $\overline{W\cap X}^X\neq X$. So, there is an open dense subset $V$ of $X\setminus\overline W$ with $\rho(x,\overline{W\cap X}^X)>0$ for all $x\in V$. Since $f_W$ is continuous, the set
$\widetilde V=\{y\in\beta X:f_W(y)>0\}$ is open in $\beta X$ and disjoint from $\overline W$. Finally, because $V\subset\widetilde V$ and $V$ is dense in
$X\setminus\overline W$, $\widetilde V$ is dense in $\beta X\setminus\overline W$.
 So, $d$ satisfies condition $K1)^{*}$. Conditions $K2)$ and $K3)$ also hold. Hence, by Lemma \ref{lem3}, it suffices to show that $d$ satisfies $K4)^*$. To this end, let $\{\overline W_n\}$ be an increasing sequence of regularly closed subsets of $\beta X$ and $W=\bigcup_{n\geq 1}W_n$. We have $d(y,\overline W)\leq\inf_nd(y,\overline W_n)$ for all $y\in\beta X$. Moreover, since $\rho$ satisfies $K4)$, $d(y,\overline W)=\inf_nd(y,\overline W_n)$ if $y\in X$. Suppose there is $y_0\in\beta X\setminus X$ with $d(y_0,\overline W)<\inf_nd(y_0,\overline W_n)$. Consequently, for every $n$ there exists a neighborhood $V_n$ of $y_0$ in $\beta X$ such that $\delta< d(y,\overline W_n)$ for all $y\in V_n$, where
$d(y_0,\overline W)<\delta<\inf_nd(y_0,\overline W_n)$. We also choose a neighborhood $V_0$ of $y_0$ with $d(y,\overline W)<\delta$ for all $y\in V_0$. This implies that $d(y,\overline W)<\delta\leq\inf_nd(y,\overline W_n)$ provided $y\in V=\bigcap_{n\geq 1}V_0\cap V_n$. But $V\cap X\neq\varnothing$ because $X$ is pseudocompact. Thus,
$d(y,\overline W)<\inf_nd(y,\overline W_n)$ for any $y\in V\cap X$, a contradiction.

It follows from the definition of $d$ that it satisfies condition $K5)$ provided $\rho$ is regular.
\end{proof}

\begin{cor}
Every pseudocompact quasi $\kappa$-metrizable space $X$ is skeletally generated.
\end{cor}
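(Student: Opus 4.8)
The plan is to combine Theorem~\ref{thm2} with the already-quoted characterization of skeletally generated compacta. The key external input is the statement embedded in the proof of Lemma~\ref{lem3}: by \cite[Theorem 1.4]{v}, a \emph{compact} space admitting a non-negative function $\rho(x,C)$ satisfying $K1)^{*}$, $K2)$, $K3)$ and $K4)^{*}$ is skeletally generated. Since a quasi $\kappa$-metric is in particular such a function (axioms $K1)^{*}$, $K2)$, $K3)$ hold outright, and $K4)$ implies $K4)^{*}$), every quasi $\kappa$-metrizable \emph{compactum} is skeletally generated. So the task reduces to passing from a pseudocompact quasi $\kappa$-metrizable $X$ to a compact one.

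First I would invoke Theorem~\ref{thm2}: since $X$ is pseudocompact and quasi $\kappa$-metrizable, $\beta X$ is quasi $\kappa$-metrizable. Being compact, $\beta X$ is therefore skeletally generated by the remark above. Next I would use the fact that a pseudocompact space $X$ is $C^{*}$-embedded and hence dense in $\beta X$, combined with the known behaviour of skeletal generation under dense embeddings. Concretely, if $\beta X = \varprojlim S$ for an inverse system $S = \{X_\alpha, p^\beta_\alpha, A\}$ of separable metric spaces with skeletal bonding maps witnessing skeletal generation, one checks that $X$ sits in $\varprojlim S$ with $p_\alpha(X) = p_\alpha(\beta X) = X_\alpha$ for every $\alpha$: indeed $p_\alpha(X)$ is dense in $X_\alpha$ (as $X$ is dense in $\beta X$) and, since $X$ is pseudocompact, $p_\alpha(X)$ is a pseudocompact dense subset of the metric space $X_\alpha$, hence compact, hence all of $X_\alpha$. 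Conditions $(1)$ and $(2)$ in the definition of skeletally generated are properties of the system $S$ and the bonding maps alone, so they persist; condition $(3)$ is exactly what was just verified for $X$. Thus $X$ itself is skeletally generated.

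Alternatively, and perhaps more cleanly, one can bypass the inverse-system bookkeeping entirely by using \cite[Theorem 1.1]{v}: skeletal generation is equivalent to being $I$-favorable in the sense of \cite{dkz}, and $I$-favorability is inherited by dense subspaces (the strategy for the dense subspace is obtained by restricting a winning strategy on the ambient space and intersecting with the subspace). Since $\beta X$ is $I$-favorable and $X$ is dense in $\beta X$, $X$ is $I$-favorable, hence skeletally generated.

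The main obstacle, such as it is, lies in justifying the inheritance claim in the previous paragraph precisely — whichever route one picks. For the inverse-system route one must confirm that $p_\alpha(X) = X_\alpha$, and the only nontrivial ingredient there is that pseudocompactness of $X$ forces its continuous image $p_\alpha(X)$ to be a compact (hence closed, hence everything, being dense) subset of the metrizable $X_\alpha$; this is standard. For the $I$-favorability route one needs the fact that $I$-favorability passes to dense subspaces, which is part of the basic theory in \cite{dkz} and \cite{v}. Either way the corollary follows with no further computation, so I would state it in a sentence or two: apply Theorem~\ref{thm2} to get that $\beta X$ is quasi $\kappa$-metrizable, invoke \cite[Theorem 1.4]{v} (as recalled in the proof of Lemma~\ref{lem3}) to conclude $\beta X$ is skeletally generated, and then descend to the dense pseudocompact subspace $X$.
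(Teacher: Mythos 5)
Your proposal is correct and follows the paper's own proof essentially verbatim: apply Theorem~\ref{thm2} to get that $\beta X$ is quasi $\kappa$-metrizable, use the fact (from \cite{v}) that quasi $\kappa$-metrizable compacta are skeletally generated, and then pass to the dense subspace $X$ via the inheritance of skeletal generation (equivalently, $I$-favorability) by dense subsets, which is exactly the fact from \cite{dkz} and \cite{v} that the paper cites. Your alternative explicit inverse-system verification using pseudocompactness is an unnecessary but harmless elaboration of that last step.
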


\begin{proof}
We already noted that every quasi $\kappa$-metrizable compactum is skeletally generated, see \cite{v}. So, by Theorem \ref{thm2}, $\beta X$ is skeletally generated. Finally, by \cite{dkz} and \cite{v}, every dense subset of a skeletally generated space is also skeletally generated.
\end{proof}

\begin{pro}\label{open image}
Suppose $f:X\to Y$ is a perfect open surjection and $X$ is (regularly) quasi $\kappa$-metrizable. Then $Y$ is also (regularly) quasi $\kappa$-metrizable.
\end{pro}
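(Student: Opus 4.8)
The plan is to pull back the (regular) quasi $\kappa$-metric from $X$ to $Y$ using the open perfect surjection $f$. Given a quasi $\kappa$-metric $\rho$ on $X$ (normed, by rescaling), for a regularly closed set $C\subset Y$ I would set $\widetilde C=\overline{f^{-1}(\inte C)}$, which is a regularly closed subset of $X$, and define $d(y,C)=\inf\{\rho(x,\widetilde C):x\in f^{-1}(y)\}$. Then I would check axioms $K2)$, $K3)$, $K4)^{*}$ and $K1)^{*}$ (plus $K5)$ in the regular case), after which Lemma~\ref{lem3} finishes the proof provided $Y$ has countable cellularity — and $Y$ does, being an open image of $X$, which has countable cellularity since $X$ is quasi $\kappa$-metrizable, cf.\ the proof of Lemma~\ref{lem3}.

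The routine verifications: $K2)$ is monotonicity of $C\mapsto\widetilde C$ combined with $K2)$ for $\rho$. For $K3)$, the continuity of $d(\cdot,C)$, I would use that $f$ is a perfect open map: $y\mapsto\min_{x\in f^{-1}(y)}\rho(x,\widetilde C)$ is continuous because $\rho(\cdot,\widetilde C)$ is continuous, $f^{-1}(y)$ is compact, and $f$ being open makes $y\mapsto f^{-1}(y)$ lower semicontinuous while properness makes it upper semicontinuous — so the fibrewise minimum is continuous (a standard Berge-type argument). For $K1)^{*}$: if $C\subsetneqq Y$ is regularly closed, then $\widetilde C\subsetneqq X$, so there is a dense open $V\subset X\setminus\widetilde C$ with $\rho(\cdot,\widetilde C)>0$ exactly on $V$; then $W=Y\setminus\overline{f(X\setminus V)}$ should be the required dense open subset of $Y\setminus C$ — here openness of $f$ is used to show $f(X\setminus V)$ has empty interior inside a suitable set, so $W$ is dense in $Y\setminus C$, while $d(y,C)>0$ on $W$ because every fibre over $W$ meets $V$. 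For $K4)^{*}$: given an increasing sequence $\{C_n\}$ with $C=\overline{\bigcup C_n}$, I would show $\widetilde C=\overline{\bigcup\widetilde{C_n}}$ (both equal $\overline{f^{-1}(\bigcup\inte C_n)}$ up to closure/interior manipulations), and then transfer $K4)$ for $\rho$ fibrewise; the inequality $d(y,C)\le\inf_n d(y,C_n)$ is immediate, and for the reverse one picks, for $y$, a fibre point nearly realizing $d(y,C)$ and uses $K4)$ on $X$ together with compactness of $f^{-1}(y)$ to find a single point of the fibre that is simultaneously close to all $\widetilde{C_n}$ — a diagonal argument over the compact fibre. For $K5)$ in the regular case, the triangle-type inequality transfers because $\overline{d}(C',C)=\sup_{y\in C'}d(y,C)$ relates to $\overline{\rho}(\widetilde{C'},\widetilde C)$ via the fibre correspondence.

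The main obstacle will be the interaction between taking fibrewise infima and the closure/interior operations in the definition of $\widetilde C$ — specifically, verifying carefully that $\widetilde C=\overline{\bigcup_n\widetilde{C_n}}$ for $K4)^{*}$, and that in $K1)^{*}$ the set $f(X\setminus V)$ is small enough (nowhere dense off $C$) that its complement stays dense in $Y\setminus C$. Both rely essentially on $f$ being open (so that images of dense open sets behave well) and perfect (so fibres are compact and the minima are attained and vary continuously). Once these set-theoretic identities about regularly closed sets and their $f$-preimages are pinned down, the rest is a mechanical transfer of the axioms for $\rho$ through the compact fibres, and the countable-cellularity reduction via Lemma~\ref{lem3} lets us work with sequences rather than arbitrary transfinite chains.
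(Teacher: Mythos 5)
Your construction takes the \emph{infimum} of $\rho(\cdot,\widetilde C)$ over the fibre, and this is precisely where the argument breaks: the paper takes the \emph{supremum}, and the choice matters for $K1)^{*}$. With the infimum (which is attained, the fibre being compact and $\rho(\cdot,\widetilde C)$ continuous), $d(y,C)>0$ holds exactly when the whole fibre $f^{-1}(y)$ is contained in the set $V$ on which $\rho(\cdot,\widetilde C)$ is positive, i.e.\ on $Y\setminus f(X\setminus V)$. Your claim that this set is dense in $Y\setminus C$ because ``openness of $f$ makes $f(X\setminus V)$ small'' is false: open maps do not send nowhere dense sets to sets with empty interior. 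Take $f$ to be the projection $Y\times T\to Y$ with $T$ compact and $t_0\in T$ non-isolated; if the given quasi $\kappa$-metric on $X=Y\times T$ happens to have $V=((Y\setminus C)\times T)\setminus((Y\setminus C)\times\{t_0\})$ (a perfectly legitimate dense open subset of $X\setminus\widetilde C$), then no fibre over $Y\setminus C$ lies inside $V$, so your $d(\cdot,C)$ vanishes identically and $K1)^{*}$ fails. With the supremum instead, $d(y,C)>0$ iff $f^{-1}(y)$ \emph{meets} $V$, i.e.\ iff $y\in f(V)$, which is open (openness of $f$) and dense in $Y\setminus C$ (density of $V$ in $f^{-1}(Y\setminus C)$); this is the paper's route, and the rest of your fibrewise machinery (Berge-type continuity using that $f$ is both open and closed, the identity $f^{-1}(\overline U)=\overline{f^{-1}(U)}$) then goes through essentially as in the paper.

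A secondary gap: you reduce $K4)$ to $K4)^{*}$ via Lemma~3.3 by asserting that $X$, being quasi $\kappa$-metrizable, has countable cellularity. The proof of Lemma~3.3 establishes this only for \emph{compact} spaces (via skeletal generation), and it is false in general: an uncountable discrete space carries the quasi $\kappa$-metric $\rho(x,C)=1-\chi_C(x)$ and is not ccc. Since neither $X$ nor $Y$ is assumed compact here, you must verify $K4)$ for arbitrary increasing transfinite families directly; with the supremum definition this amounts to the interchange $\inf_\alpha\sup_{x\in f^{-1}(y)}=\sup_{x\in f^{-1}(y)}\inf_\alpha$ for a decreasing net of continuous functions on the compact fibre, which is a Dini-type argument, not the sequential diagonalization you sketch.
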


\begin{proof}
Let $\rho$ be a quasi $\kappa$-metric on $X$. Since $f$ is open, $f^{-1}(\overline U)=\overline{f^{-1}(U)}$ for any open $U\con Y$. So, $f^{-1}(\overline U)$ is regularly closed set in $X$ and we
define $$d(y,\overline U)=\sup\{\rho(x,f^{-1}(\overline U)):x\in f^{-1}(y)\}.$$
One can check that $d$ satisfies conditions $K2)$ and $K4)$, and condition $K5)$ in case $\rho$ is regular. Moreover, $\overline U\neq Y$ implies $f^{-1}(\overline U)\neq X$. So, there is a dense open subset $V\con X\setminus f^{-1}(\overline U)$ such that $\rho(x,f^{-1}(\overline U))>0$ iff $x\in V$. Then $f(V)$ is a dense and open subset of $Y\setminus\overline U$ such that $f^{-1}(y)\cap V\neq\varnothing$ for all $y\in f(V)$. Hence, $d(y,\overline U)>0$ if $y\in f(V)$. If $y\not\in f(V)$, then $f^{-1}(y)\cap V=\varnothing$. Thus, $d(y,\overline U)>0$ iff $y\in f(V)$. Finally, let check continuity of the functions $d(\cdot,\overline U)$. Suppose $d(y_0,\overline U)<\varepsilon$ for some $y_0$ and $U$. Then $\rho(x,f^{-1}(\overline U))<\varepsilon$ for all $x\in f^{-1}(y_0)$. Consequently, there is a neighborhood $W$ of $f^{-1}(y_0)$ with
$\rho(x,f^{-1}(\overline U))<\varepsilon$ for all $x\in W$. Since, $f$ is closed, $y_0$ has a neighborhood $G$ such that $f^{-1}(G)\subset W$. This implies that $d(y,\overline U)<\varepsilon$ for all $y\in G$. Now, let $d(y_0,\overline U)>\delta$ for some $\delta\in\mathbb R$. So, there exists $x_0\in f^{-1}(y_0)$ with $\rho(x_0,f^{-1}(\overline U))>\delta$. Choose a neighborhood $O$ of $x_0$ such that $\rho(x,f^{-1}(\overline U))>\delta$ for all $x\in O$. Then, $f(O)$ is a neighborhood of $y_0$ and $d(y,\overline U)>\delta$ for any $y\in f(O)$. Therefore, each $d(.,\overline U)$ is continuous.
\end{proof}

Recall that a surjective map $f:X\to Y$ is {\em irreducible} provided there is no a proper closed subset $F$ of $X$ with $f(F)=Y$.
\begin{pro}\label{irreducible}
Let $f:X\to Y$ be a perfect irreducible surjection, and $Y$ is (regularly) quasi $\kappa$-metrizable. Then $X$ is also (regularly) quasi $\kappa$-metrizable.
\end{pro}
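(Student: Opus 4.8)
The plan is to pull back a (regular) quasi $\kappa$-metric $d$ from $Y$ along $f$, exploiting the fact that a perfect irreducible surjection has good behaviour on regularly closed sets. Specifically, if $U\subset X$ is open, then $f(X\setminus U)$ is a proper closed subset of $Y$ by irreducibility, so $V_U:=Y\setminus f(X\setminus U)$ is a nonempty open subset of $Y$ with $f^{-1}(V_U)\subset U$; moreover $\overline{f^{-1}(V_U)}\subset\overline U$. For a regularly closed set $C=\overline U\subset X$ (with $U$ regularly open) I would set
\[
\rho(x,C)=d\big(f(x),\overline{V_{X\setminus C}}\,\big),
\]
i.e. I encode $C$ by the regularly open set $V_{X\setminus C}\subset Y$ and measure there. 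The first task is to verify this is well defined on regularly closed sets and that it satisfies $K2)$: if $C\subset C'$ then $X\setminus C'\subset X\setminus C$, hence $V_{X\setminus C'}\subset V_{X\setminus C}$ and $\overline{V_{X\setminus C'}}\subset\overline{V_{X\setminus C}}$, so monotonicity follows from $K2)$ for $d$. Condition $K3)$ (continuity in $x$) is immediate since $\rho(\cdot,C)=d(\cdot,\overline{V_{X\setminus C}})\circ f$ is a composition of continuous maps.

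For $K4)$ I would take an increasing transfinite family $\{C_\alpha\}$ of regularly closed sets in $X$ and show that the assignment $C\mapsto\overline{V_{X\setminus C}}$ sends $\overline{\bigcup_\alpha C_\alpha}$ to $\overline{\bigcup_\alpha \overline{V_{X\setminus C_\alpha}}}$, up to the identification by closures; then $K4)$ for $d$ on $Y$ gives $K4)$ for $\rho$. The containment $\overline{V_{X\setminus C}}\subset$ the appropriate regularly closed set in $Y$ is routine; the reverse, showing the family on the $Y$ side has the same closed union, is where one must use that $f$ is perfect (so images of closed sets are closed and limits behave well) together with irreducibility. For $K1)^{*}$: if $C\subsetneqq X$ is regularly closed, pick a dense open $W\subset Y\setminus\overline{V_{X\setminus C}}$ with $d(y,\overline{V_{X\setminus C}})>0$ iff $y\in W$ (from $K1)^{*}$ on $Y$); then $f^{-1}(W)$ is open, nonempty, disjoint from $C$, and $\rho(x,C)>0$ iff $f(x)\in W$ iff $x\in f^{-1}(W)$. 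Density of $f^{-1}(W)$ in $X\setminus C$ follows because $f$ is continuous, surjective and, being perfect with $W$ dense, the preimage of a dense open set is dense. Finally, if $d$ is regular, $K5)$ for $\rho$ transfers directly: $\overline\rho(C',C)=\sup\{\rho(x,C):x\in C'\}=\sup\{d(f(x),\overline{V_{X\setminus C}}):x\in C'\}\le \overline d(\overline{V_{X\setminus C'}},\overline{V_{X\setminus C}})$, using $f(C')\subset\overline{V_{X\setminus C'}}$, so $K5)$ on $Y$ yields $K5)$ on $X$.

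The main obstacle is bookkeeping the translation between regularly closed sets upstairs and downstairs, in particular verifying in $K4)$ that closures of the increasing union are preserved under $C\mapsto \overline{V_{X\setminus C}}$; irreducibility is essential precisely to guarantee that passing to $X\setminus C$, taking $f$-image, and complementing does not collapse the family or lose the "regularly closed" structure, and perfectness is needed to keep images closed. Alternatively, and perhaps more cleanly, one can route through Corollary \ref{cor1}: it suffices to produce a function on $X$ satisfying $K1)^{**}$ and $K2)$--$K4)$ (plus $K5)$ in the regular case), which relaxes $K1)^{*}$ to the existence of a single point $y\notin C$ with $\rho(y,C)>0$; that weaker requirement is easier to check with the pullback above, since one only needs one good point in $f^{-1}(W)$ rather than density, and then Corollary \ref{cor1} upgrades it to a genuine quasi $\kappa$-metric.
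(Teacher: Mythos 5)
There is a genuine error in the core definition, and it is not just bookkeeping: you apply the small image to the wrong set. With your notation $V_U=Y\setminus f(X\setminus U)$, the set you measure against is $\overline{V_{X\setminus C}}=\overline{Y\setminus f(C)}=Y\setminus\mathrm{Int}\,f(C)$, i.e.\ essentially the \emph{complement} of $f(C)$ rather than $f(C)$ itself. This breaks the axioms. Your own verification of $K2)$ shows it: from $C\subset C'$ you correctly get $\overline{V_{X\setminus C'}}\subset\overline{V_{X\setminus C}}$, but then $K2)$ for $d$ gives $d\bigl(f(x),\overline{V_{X\setminus C}}\bigr)\le d\bigl(f(x),\overline{V_{X\setminus C'}}\bigr)$, i.e.\ $\rho(x,C)\le\rho(x,C')$ --- the reverse of what $K2)$ demands. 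Likewise $K1)^*$ fails: for $x\in C$ with $f(x)$ in the interior of $f(C)$ one typically has $\rho(x,C)>0$, and the set $W$ you produce satisfies $W\subset Y\setminus\overline{V_{X\setminus C}}=\mathrm{Int}\,f(C)$, so $f^{-1}(W)$ is certainly not disjoint from $C$. The alternative route through Corollary \ref{cor1} does not rescue this, since $K1)^{**}$ and $K2)$ already fail for the function as defined.

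The repair is exactly the construction you almost wrote down: for $C=\overline U$ regularly closed in $X$, set $d_X(x,C)=\rho(f(x),f(C))$, where $\rho$ is the quasi $\kappa$-metric on $Y$; the role of irreducibility and perfectness is to show that $f(C)$ is regularly closed, namely $f(C)=\overline{U_\sharp}$ with $U_\sharp=\{y:f^{-1}(y)\subset U\}=Y\setminus f(X\setminus U)$ open in $Y$. So the small image should be taken of $\mathrm{Int}\,C$, not of $X\setminus C$. With that correction, $K2)$ and $K3)$ are immediate, $K4)$ follows from $f\bigl(\overline{\bigcup_\alpha C_\alpha}\bigr)=\overline{\bigcup_\alpha f(C_\alpha)}$, and for $K1)^*$ one takes $W=f^{-1}(V)$ with $V$ the dense open set from $K1)^*$ on $Y$ and proves density of $W$ in $X\setminus C$ by observing that $O_\sharp$ is a nonempty open subset of $Y\setminus f(C)$ for every open $O\subset X\setminus C$ (this is where irreducibility enters again; it is not automatic that preimages of dense sets are dense, one needs the small images to be nonempty). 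This is the paper's argument.
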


\begin{proof}
Suppose $\rho$ is a quasi $\kappa$-metric on $Y$. For every regularly closed  $C\con X$ define $d(x,C)=\rho(f(x),f(C))$. This definition is correct because $f(C)$ is regularly closed in $Y$. Indeed, let $C=\overline U$ with $U$ open in $X$. Since $f$ perfect and irreducible, we have
$f(C)=\overline{U_\sharp}$, where $U_\sharp=\{y\in Y:f^{-1}(y)\con U\}=Y\setminus f(X\setminus U)$ is open in $Y$.
 It is easily seen that $d$ satisfies conditions $K2)$ and $K3)$. Condition $K4)$ follows from the equality
$f(\overline{\bigcup_\alpha C_\alpha})=\overline{\bigcup_\alpha f(C_\alpha)}$ for any family of regularly closed sets in $X$. To see that $d$ satisfies also condition $K1)^*$, we observe that for every regularly closed $C\subsetneqq X$ there is a dense open subset $V\con Y\setminus f(C)$ such that
$\rho(y,f(C))>0$ iff $y\in V$. Then $W=f^{-1}(V)$ is open in $X$ and disjoint from $C$. Moreover, $d(x,C)>0$ iff $x\in W$. It remains to show that $W$ is dense in $X\setminus C$. And that is really true because for every open $O\con X\setminus\ C$ the set $O_\sharp$ is a non-empty open subset of $Y\setminus f(C)$. So, $O_\sharp\cap V\neq\varnothing$, which implies $W\cap O\neq\varnothing$.

One can also see that $d$ is regular provided so is $\rho$.
\end{proof}
It is well known that for every space $X$ there is a unique extremally disconnected space $\widetilde X$ and a perfect irreducible map $f:\widetilde X\to X$.
The space $\widetilde X$ is said to be the {\em absolute of $X$}. A space $Y$ is called {\em co-absolute to $X$} if their absolutes are homeomorphic.
\begin{cor}
The absolute of any (regularly) quasi $\kappa$-metrizable space is (regularly) quasi $\kappa$-metrizable.
\end{cor}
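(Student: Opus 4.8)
The statement to prove is the final Corollary: the absolute $\widetilde X$ of any (regularly) quasi $\kappa$-metrizable space $X$ is (regularly) quasi $\kappa$-metrizable. The natural approach is to invoke Proposition~\ref{irreducible} directly. Recall that by definition the absolute comes equipped with a perfect irreducible map $f\colon\widetilde X\to X$. Proposition~\ref{irreducible} says precisely that if $f\colon X'\to Y'$ is a perfect irreducible surjection and the \emph{target} $Y'$ is (regularly) quasi $\kappa$-metrizable, then the \emph{source} $X'$ is (regularly) quasi $\kappa$-metrizable as well. Applying this with $Y'=X$ (which is assumed to be quasi $\kappa$-metrizable) and $X'=\widetilde X$ immediately yields the conclusion.

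\textbf{Steps.} First I would recall the existence and defining property of the absolute: for every (Tychonoff) space $X$ there is an extremally disconnected space $\widetilde X$ together with a perfect irreducible surjection $f\colon\widetilde X\to X$ (this is the Gleason--Ponomarev absolute; the existence was just stated in the excerpt immediately before the Corollary). Second, I would observe that the hypotheses of Proposition~\ref{irreducible} are met verbatim: $f$ is perfect, irreducible, surjective, and $X$ is (regularly) quasi $\kappa$-metrizable. Third, I would apply Proposition~\ref{irreducible} to conclude that $\widetilde X$ is (regularly) quasi $\kappa$-metrizable. That is the entire argument; there is essentially no computation to carry out, since all the work has already been done in Proposition~\ref{irreducible}, where one pulls a quasi $\kappa$-metric $\rho$ back along $f$ via $d(x,C)=\rho(f(x),f(C))$ and checks the axioms using that $f$ perfect irreducible sends regularly closed sets to regularly closed sets and commutes with closures of increasing unions.

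\textbf{Main obstacle.} There is no real obstacle: the Corollary is a one-line consequence of the preceding proposition, which is exactly why it is stated as a corollary rather than a theorem. The only point requiring a moment of care is citing the existence of the absolute together with its perfect irreducible projection map — but this is classical and was already announced in the sentence preceding the statement. One might also remark that since the absolute is uniquely determined up to homeomorphism, the conclusion extends to any space co-absolute to $X$: if $Y$ is co-absolute to $X$ and $X$ is (regularly) quasi $\kappa$-metrizable, then $\widetilde Y\cong\widetilde X$ is (regularly) quasi $\kappa$-metrizable, hence so is $Y$ by another application of Proposition~\ref{irreducible} with target $Y$ — wait, that last step would need the source-to-target direction, which is Proposition~\ref{open image} or would require $Y$ to be recoverable from $\widetilde Y$; I would therefore state only the claim about $\widetilde X$ itself unless the co-absolute strengthening can be justified cleanly.
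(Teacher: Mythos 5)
Your proposal is correct and is exactly the paper's (implicit) argument: the corollary follows immediately by applying Proposition~\ref{irreducible} to the perfect irreducible surjection $f\colon\widetilde X\to X$ from the absolute, whose existence is recalled in the sentence preceding the statement. Your closing caution about the co-absolute strengthening is also well placed — the paper handles that case separately and only for compact spaces, using an additional result of Kucharski--Plewik rather than a direct application of Proposition~\ref{irreducible}.
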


\begin{remark}
The last corollary shows that the class of $\kappa$-metrizable spaces is a proper subclass of the quasi $\kappa$-metrizable spaces. Indeed, let $X$ be a $\kappa$-metrizable compact infinite space. Then its absolute $aX$ is  quasi $\kappa$-metrizable. On the other hand, $aX$ being extremally disconnected can not be $\kappa$-metrizable (otherwise, it should be discrete by \cite[Theorem 11]{sc1}).
\end{remark}

\begin{cor}
Every compact space co-absolute to a quasi $\kappa$-metrizable space is skeletally generated.
\end{cor}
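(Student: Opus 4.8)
\emph{Proof proposal.} The plan is to pass to the common absolute and then push skeletal generation back down along the canonical irreducible map. So let $Y$ be a compact space co-absolute to a quasi $\kappa$-metrizable space $X$, and let $Z$ be the absolute of $Y$; since $X$ and $Y$ are co-absolute, $Z$ is also the absolute of $X$, and since the absolute of a compact space is compact, $Z$ is compact. By the previous corollary $Z=aX$ is quasi $\kappa$-metrizable, so $Z$ is a quasi $\kappa$-metrizable compactum and hence skeletally generated --- this is the ``correct direction'' of \cite[Theorem~1.4]{v} already used in the proof of Lemma~\ref{lem3}, since a quasi $\kappa$-metric satisfies $K1)^{*}$, $K2)$, $K3)$ and, a fortiori, $K4)^{*}$. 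Thus the whole statement is reduced to: \emph{if the absolute of a compact space $Y$ is skeletally generated, then $Y$ is skeletally generated.}

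To handle this reduced statement I would use \cite[Theorem~1.1]{v}, by which ``skeletally generated'' coincides with ``$I$-favorable'' in the sense of \cite{dkz}, together with the fact that $I$-favorability depends only on the algebra of regular open sets of the space. Explicitly, let $f\colon Z\to Y$ be the canonical perfect irreducible surjection, and for open $U\subset Z$ put $U_\sharp=Y\setminus f(Z\setminus U)$ --- the same open set used in the proof of Proposition~\ref{irreducible}. Since $f$ is closed, irreducible and onto, $U\mapsto U_\sharp$ sends nonempty open sets to nonempty open sets, is monotone, satisfies $f^{-1}(U_\sharp)\subset U$ and $(f^{-1}(W))_\sharp=W$ for open $W\subset Y$, induces an isomorphism of the regular open algebras of $Z$ and $Y$ preserving suprema, and $f^{-1}$ carries dense open subsets of $Y$ to dense open subsets of $Z$. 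Using this dictionary, a winning strategy for the pertinent player in the open--open game on $Z$ is transcribed to one on $Y$: a move $U$ of the first player on $Y$ is read on $Z$ as a preimage of the appropriate type, an answer $V\subset U$ of the second player on $Y$ is read on $Z$ as the nonempty open set $f^{-1}(V)$, which lies inside the translated move because $f^{-1}(U_\sharp)\subset U$, and the density winning condition survives because $\bigcup_n f^{-1}(V_n)=f^{-1}(\bigcup_n V_n)$ while $(\cdot)_\sharp$ preserves suprema. Hence $Y$ is $I$-favorable, i.e., skeletally generated.

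The substantive step is the second paragraph, and the main obstacle there is bookkeeping rather than ideas: one must first fix the exact conventions of the open--open game adopted in \cite{dkz} and \cite{v} (order of play, which player wins, which sets are required nonempty), and then check carefully that a strategy --- a function of the whole history of a play --- transcribes consistently move by move, so that the density condition is respected along the entire run. Everything in the first paragraph, in particular the appeal to the previous corollary and to the established half of \cite[Theorem~1.4]{v}, is routine. One may also bypass the explicit transcription by invoking the known fact that $I$-favorability of a compact space is a co-absolute invariant (again because the open--open game sees only the regular open algebra); applied to the pair $Y$, $Z$ this gives the reduced statement at once, though it relies on an external result rather than on the tools developed here.
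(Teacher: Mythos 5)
Your proposal is correct and follows essentially the same route as the paper: pass to the common absolute $Z$, use Proposition~\ref{irreducible} (equivalently its corollary) to make $Z$ quasi $\kappa$-metrizable, conclude $Z$ is skeletally generated via the valid direction of \cite[Theorem~1.4]{v}, and then descend along the irreducible map. The only difference is in the last step: the paper simply cites \cite[Lemma~1]{kp2} for the fact that skeletal generation passes from the absolute down to the compact space, whereas you sketch the open--open game transfer through the regular-open-algebra isomorphism yourself; your sketch is sound in outline and would work once the bookkeeping you flag is carried out.
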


\begin{proof}
Let $X$ and $Y$ be compact spaces having the same absolute $Z$. So, there are perfect irreducible surjections $g:Z\to Y$ and $f:Z\to X$. If $Y$ is quasi $\kappa$-metrizable, then so is $Z$, see Proposition \ref{irreducible}. Hence, $Z$ is skeletally generated, and by \cite[Lemma 1]{kp2}, $X$ is also skeletally generated.
\end{proof}
Recall that the hyperspace $\rm{exp}X$ consists of all compact non-empty subsets $F$ of $X$ such that the sets of the form
$$[U_1,..,U_k]=\{H\in\rm{exp}X: H\con\bigcup_{i=1}^{k} U_i{~}\mbox{and}{~}H\cap U_i\neq\varnothing{~}\mbox{for all}{~}i\}$$  form a base $\mathcal B_{\rm{exp}}$ for $\rm{exp}X$,
where each $U_i$ belongs to a base $\mathcal B$ for $X$, see \cite{k}.
\begin{pro}
If $X$ is (regularly) quasi $\kappa$-metrizable, so is $\rm{exp}X$.
\end{pro}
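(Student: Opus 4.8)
My plan is to build a $\pi$-capacity on $\exp X$ and then apply Lemma~\ref{lem1}. From a normed (regular) quasi $\kappa$-metric $\rho$ on $X$, Lemma~\ref{lem2} produces a (regular) $\pi$-capacity $\xi$ on $X$; its defining formula shows $\xi(x,U)\le\xi(x,U')$ whenever $U\subseteq U'$, and this monotonicity will be used below. For regularly open $U$ put $V(U)=\{x\in U:\xi(x,U)>0\}$ (open by E3, dense in $U$ by E2). For a basic Vietoris set $[U_1,\dots,U_k]$ with the $U_i$ regularly open and $H\in[U_1,\dots,U_k]$ define
$$\eta\big(H,[U_1,\dots,U_k]\big)=\frac1k\min\Big(\ \min_{1\le i\le k}\ \sup_{x\in H\cap U_i}\xi(x,U_i)\ ,\ \ \sup\big\{c:H\subseteq\textstyle\bigcup_{i=1}^{k}\{x:\xi(x,U_i)>c\}\big\}\ \Big)$$
(with $\sup\varnothing=0$), and $\eta(H,[U_1,\dots,U_k])=0$ otherwise. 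The two terms play complementary roles in E4 below: the first records a ``thick'' witness inside each coordinate, the second records that $H$ is robustly covered by the coordinates. (To have a base of regularly open sets for $\exp X$ one replaces each $[U_1,\dots,U_k]$ by its regularization $\mathrm{Int}\,\overline{[U_1,\dots,U_k]}\supseteq[U_1,\dots,U_k]$; this is routine and I suppress it.) Lemma~\ref{lem1} then converts $\eta$ into a (regular) quasi $\kappa$-metric on $\exp X$.

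Conditions E1 and E3 follow at once from E1, E3 for $\xi$ and the fact that $\{H:H\cap O\neq\varnothing\}$ and $\{H:H\subseteq O\}$ are open in $\exp X$ for open $O\subseteq X$. For E2, any finite $F\subseteq X$ with $F\subseteq\bigcup_iV(U_i)$ meeting each $V(U_i)$ has $\eta(F,[U_1,\dots,U_k])>0$, and such $F$ are dense in $[U_1,\dots,U_k]$. In the regular case, E5 is obtained by repeating the computation from the proof of Lemma~\ref{lem2} in each of the two terms.

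The real work — and where I expect the main difficulty — is E4. Let $U(t)=[U^t_1,\dots,U^t_{k(t)}]$, $\lambda(t)\in\exp X$, $\widetilde\lambda=\lim_{\mathcal F}\lambda(t)$, and $\lim_{\mathcal F}\eta(\lambda(t),U(t))>a>0$. Since $\eta(\lambda(t),U(t))\le 1/k(t)$, the $k(t)$ are bounded on a member of $\mathcal F$, so we may assume $k(t)\equiv k$. Pick $a<b'<b<\lim_{\mathcal F}\eta(\lambda(t),U(t))$ and $F_1\in\mathcal F$ with $\eta>b$ on $F_1$. For $t\in F_1$ the first term furnishes, for each $i$, a witness $\lambda_i(t)\in\lambda(t)\cap U^t_i$ with $\xi(\lambda_i(t),U^t_i)>kb$, and the second term gives $\lambda(t)\subseteq\bigcup_i\{x:\xi(x,U^t_i)>kb\}$. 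Next I would invoke the elementary fact that, along an ultrafilter, a selection $q(t)\in\lambda(t)$ from a Vietoris-convergent net of compacta has $\lim_{\mathcal F}q(t)$ existing in $X$ and lying in $\widetilde\lambda$ (it clusters at a point of the compactum $\widetilde\lambda$ since it is eventually inside every neighbourhood of it, and cluster points along ultrafilters are limits), and that every point of $\widetilde\lambda$ is of this form. Hence $\widetilde\lambda_i:=\lim_{\mathcal F}\lambda_i(t)\in\widetilde\lambda$ with $\lim_{\mathcal F}\xi(\lambda_i(t),U^t_i)\ge kb$, and for $p\in\widetilde\lambda$ a selection $q_p(t)\to p$ eventually lies in $\{x:\xi(x,U^t_{i(p)})>kb\}$ for a single $i(p)\le k$, so $\lim_{\mathcal F}\xi(q_p(t),U^t_{i(p)})\ge kb$. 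Put $D_i=\overline{\lim}_{\mathcal F}U^t_i$. Applying E4 for $\xi$ (threshold $kb'$) to $(\lambda_i,U^{(\cdot)}_i)$ and to $(q_p,U^{(\cdot)}_{i(p)})$ yields regularly open $\widetilde U_i\ni\widetilde\lambda_i$ with $\widetilde U_i\subseteq D_i$, $\xi(\widetilde\lambda_i,\widetilde U_i)>kb'$, and $\widetilde W_p\ni p$ with $\widetilde W_p\subseteq D_{i(p)}$, $\xi(p,\widetilde W_p)>kb'$. In particular $\widetilde\lambda\subseteq\bigcup_{i\le k}\mathrm{Int}\,D_i$, so finitely many of the open sets $\{x:\xi(x,\widetilde W_p)>kb'\}$, say for $p=p_1,\dots,p_r$, cover $\widetilde\lambda$. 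Set $\widetilde V_i=\mathrm{Int}\,\overline{\widetilde U_i\cup\bigcup\{\widetilde W_{p_j}:i(p_j)=i\}}$, a regularly open set contained in the closed set $D_i$, and take $\widetilde U=[\widetilde V_1,\dots,\widetilde V_k]$.

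Two things then remain. First, $\eta(\widetilde\lambda,\widetilde U)>a$: $\widetilde\lambda\in\widetilde U$ because it meets each $\widetilde V_i$ at $\widetilde\lambda_i$ and is covered by $\bigcup_j\widetilde W_{p_j}\subseteq\bigcup_i\widetilde V_i$, and by monotonicity of $\xi$ the first term of $\eta(\widetilde\lambda,\widetilde U)$ is $\ge\min_i\xi(\widetilde\lambda_i,\widetilde U_i)>kb'$ and the second term is $\ge kb'$, whence $\eta(\widetilde\lambda,\widetilde U)\ge b'>a$. Second — and this I expect to be the fussiest point — $\widetilde U\subseteq\overline{\lim}_{\mathcal F}U(t)$: for $F\in\mathcal F$, $H\in\widetilde U$ and a basic neighbourhood $N$ of $H$, one shrinks $N$ to $[O_1,\dots,O_q]$ with each $O_l$ inside some $\widetilde V_i$, with $H$ covered by and meeting every $O_l$, and with some $O_l\subseteq\widetilde V_i$ meeting $H$ for each $i$; since $\widetilde V_i\subseteq D_i=\bigcap_{G\in\mathcal F}\overline{\bigcup_{t\in G}U^t_i}$, each of the sets $\{t:O_l\cap\bigcup_iU^t_i\neq\varnothing\}$ and $\{t:U^t_i\cap\bigcup_lO_l\neq\varnothing\}$ meets every member of $\mathcal F$, hence lies in $\mathcal F$, so their common refinement with $F$ contains some $t$; for that $t$ one has $[O_1,\dots,O_q]\cap[U^t_1,\dots,U^t_k]\neq\varnothing$ (a finite set of suitably chosen points works), so $N$ meets $\bigcup_{s\in F}U(s)$ and thus $H\in\overline{\bigcup_{s\in F}U(s)}$ for every $F$. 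The main obstacle throughout is this E4 verification, and within it the combinatorics just described together with the bookkeeping needed to keep $\widetilde U$ to exactly $k$ coordinates so that the normalising factor $1/k$ survives.
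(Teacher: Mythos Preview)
Your approach is essentially the paper's: pass to a $\pi$-capacity $\xi$ on $X$ via Lemma~\ref{lem2}, push it to $\exp X$ by Shchepin's formula, and invoke Lemma~\ref{lem1}; your two terms are exactly $\min_i\sup_{x\in H}\xi(x,U_i)$ and $\inf_{x\in H}\max_i\xi(x,U_i)$ (the latter since lower semicontinuous functions attain their infimum on compacta), which is the paper's $\xi(F,[U_1,\dots,U_k])=\frac1k\min\{\inf_{x\in F}\max_i\xi_\rho(x,U_i),\min_i\sup_{x\in F}\xi_\rho(x,U_i)\}$ with the factors swapped. The only difference is that the paper cites \cite{sc2} for E1, E3, E4 and E5 and verifies just E2 (exactly as you do), whereas you sketch E4 and E5 by hand.
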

\begin{proof}
Let $\mathcal B$ be a base for $X$ and $\rho$ be a (regular) quasi $\kappa$-metric on $X$. Then $\rho$ generates a (regular) $\pi$-capacity $\xi_\rho:X\times\mathcal B\to\mathbb R$ on $X$.
Following the proof of \cite[Theorem 3]{sc2}, we define a function $\xi:\rm{exp}X\times\mathcal B_{\rm{exp}}\to\mathbb R$ by
$$\xi(F,[U_1,..,U_k])=\frac{1}{k}\min\{\inf_{x\in F}\max_i\xi_\rho(x,U_i),\min_{i}\sup_{x\in F}\xi_\rho(x,U_i)\}.$$
It was shown in \cite{sc2} that $\xi$ satisfies conditions $E1)$, $E3)$ and $E4)$, and that $\xi$ is regular provided $\xi_\rho$ is regular.
Let show that $\xi$ satisfies condition $E2)$. Since $\xi$ is satisfies $E3)$, it suffices to prove that for every $[U_1,..,U_k]$ there is a dense subset
$V_{\rm{exp}}\con [U_1,..,U_k]$ with $\xi(F,[U_1,..,U_k])>0$ for all $F\in V_{\rm{exp}}$. To this end, for each $i$ fix an open dense subset $V_i$ of $U_i$ such that $\xi_\rho(x,U_i)>0$ if $x\in V_i$. Let $V_{\rm{exp}}$ consists of all finite sets $F\con X$ such that $F\con\bigcup_{i=1}^nV_i$ and $F\cap V_i\neq\varnothing$ for all $i$. Then $V_{\rm{exp}}$ is dense in $[U_1,..,U_k]$ and $\xi(F,[U_1,..,U_k])>0$ for all $F\in V_{\rm{exp}}$. Hence, by Lemma \ref{lem1}, $\rm{exp}X$ is (regularly) quasi $\kappa$-metrizable.
\end{proof}

Shchepin \cite[Theorem 3a]{sc2} has shown that if $\rm{exp}X$ is $\kappa$-metrizable, then so is $X$. We don't know if a similar result is true for quasi $\kappa$-metrizable spaces. 

\section{Skeletally generated spaces}
In this section we provide a characterization of skeletally generated compact spaces in terms of functions similar to quasi $\kappa$-metrics.
We say that a non-negative function $d:X\times\mathcal C\to\mathbb R$ is a {\em weak  $\kappa$-metric}, where $\mathcal C$ is the family of all regularly closed subsets of $X$, if it satisfies conditions $K1)^*$, $K2) - K3)$ and the following one:
\begin{itemize}
\item[$K4)_{0}$] For every increasing transfinite family $\{C_\alpha\}_\alpha\con\mathcal C$ the function $f(x)=\inf_\alpha d(x,C_\alpha)$ is continuous.
\end{itemize}
\begin{thm}\label{skel}
A compact space is skeletally generated if and only if it is weakly  $\kappa$-metrizable.
\end{thm}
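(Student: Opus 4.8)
The statement is an equivalence, and I would treat its two implications separately; the implication ``skeletally generated $\Rightarrow$ weakly $\kappa$-metrizable'' is the new content (the analogous implication with \emph{quasi} $\kappa$-metrizable in place of \emph{weakly} $\kappa$-metrizable being exactly the one whose proof in \cite{v} was incorrect).

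For ``weakly $\kappa$-metrizable $\Rightarrow$ skeletally generated'' (with $X$ compact) I would use that a weak $\kappa$-metric $d$ satisfies $K1)^*$, $K2)$, $K3)$ and, restricting $K4)_0$ to increasing sequences, makes $x\mapsto\inf_n d(x,C_n)$ continuous for every increasing sequence $\{C_n\}$ of regularly closed sets. The plan is to revisit the proof of \cite[Theorem 1.4]{v}, which from a function satisfying $K1)^*$, $K2)$, $K3)$ and $K4)^*$ produces an inverse system of separable metric compacta with skeletal bonding maps, $\sigma$-complete index set and the limit property (2) witnessing skeletal generation, and to observe that $K4)^*$ is used there only through the continuity of the infima $\inf_n d(\cdot,C_n)$ --- which is needed to secure property (2) (density of $X_\beta$ in $\varprojlim X_{\alpha_n}$) --- while skeletality of the bonding maps already follows from $K1)^*$; hence $K4)_0$ suffices. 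Concretely, one builds a $\sigma$-complete cofinal family of countable subfamilies $L$ of a fixed base of regular open sets, each $L$ closed under finite Boolean operations, closed under adjoining, for every regularly closed $C$ whose interior lies in the algebra generated by $L$, countably many basic sets realizing (via $K1)^*$) the dense open subset of $X\setminus C$ on which $d(\cdot,C)>0$, and closed under countable increasing unions (the limit set being controlled by $K4)_0$); the quotients $X_L$ are then metrizable, the projections $X\to X_L$ skeletal, and property (2) holds by the continuity supplied by $K4)_0$.

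For the converse, let $X$ be compact and skeletally generated and fix an inverse system $S=\{X_\alpha,p^\beta_\alpha,A\}$ as in the definition with all $X_\alpha$ compact metric. Fix metrics $\mu_\alpha\le 1$ on the $X_\alpha$, let $\rho_\alpha(y,D)=\mu_\alpha(y,D)$ be the associated $\kappa$-metrics, and for a proper regularly closed $C\subset X$ set $D_\alpha=\overline{p_\alpha(\mathrm{Int}\,C)}$. Because $p_\alpha$ is skeletal one has $\mathrm{Int}\,D_\alpha\ne\varnothing$; moreover $C\subset p_\alpha^{-1}(D_\alpha)$ for all $\alpha$, the sets $p_\alpha^{-1}(D_\alpha)$ decrease with $\alpha$, and $p^\beta_\alpha(D_\beta)=D_\alpha$ for $\alpha\le\beta$. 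I would define
\[
d(x,C)=\sup_{\alpha\in A}\rho_\alpha\bigl(p_\alpha(x),D_\alpha\bigr),\qquad d(x,X)=0 .
\]
Condition $K2)$ is immediate from $\mathrm{Int}\,C\subset\mathrm{Int}\,C'$. For $K1)^*$ the open set $V_C=\bigcup_\alpha\bigl(X\setminus p_\alpha^{-1}(D_\alpha)\bigr)$ is exactly $\{x:d(x,C)>0\}$, lies in $X\setminus C$, and is dense there: a nonempty basic open $O=p_\gamma^{-1}(O_\gamma)$ with $O\cap C=\varnothing$ satisfies $O_\gamma\cap p_\gamma(\mathrm{Int}\,C)=\varnothing$, so $O_\gamma\not\subset\overline{p_\gamma(\mathrm{Int}\,C)}=D_\gamma$ (a nonempty open set cannot lie in the closure of a set it misses), whence $O$ meets $X\setminus p_\gamma^{-1}(D_\gamma)\subset V_C$. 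Finally, for an increasing transfinite family $\{C_s\}$ with closure of union $C$ one checks $D_\alpha(C)=\overline{\bigcup_s D_\alpha(C_s)}$, so $\rho_\alpha(p_\alpha(x),D_\alpha(C))=\inf_s\rho_\alpha(p_\alpha(x),D_\alpha(C_s))$ and therefore $d(x,C)=\sup_\alpha\inf_s(\cdots)\le\inf_s\sup_\alpha(\cdots)=\inf_s d(x,C_s)$; what then remains for $K4)_0$ is the continuity of $x\mapsto\inf_s d(x,C_s)$.

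The main obstacle is precisely these two continuity statements for the globally defined $d$: the axiom $K3)$ (continuity of $d(\cdot,C)$, which as a supremum of the continuous functions $\rho_\alpha(p_\alpha(\cdot),D_\alpha)$ is a priori only lower semicontinuous --- and a naive supremum need not be continuous, even after remetrizing the $X_\alpha$ so that the bonding maps become $1$-Lipschitz) and the continuity of $\inf_s d(\cdot,C_s)$ demanded by $K4)_0$. Establishing these is where the structural features of the system --- compactness of $X$, $\sigma$-completeness of $A$, and above all the limit property (2), so that every regularly closed $C$ is ``captured'' along a cofinal set of indices --- must be exploited, quite possibly forcing one to replace the naive supremum above by a suitable continuous regularization that still satisfies $K1)^*$ and $K2)$. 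This is also the exact point at which the weakening of Shchepin's axiom is essential: the \emph{equality} $d(x,C)=\inf_s d(x,C_s)$, i.e.\ the original axiom $K4)$, genuinely fails here because the bonding maps are only skeletal and not open, so $\sup_\alpha$ and $\inf_s$ cannot be interchanged --- only the \emph{continuity} asked for by $K4)_0$ survives. I expect the bulk of the work, and the real novelty, to be concentrated in this last step.
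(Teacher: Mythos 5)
The serious gap is in the direction ``skeletally generated $\Rightarrow$ weakly $\kappa$-metrizable''. You define $d(x,C)=\sup_\alpha\rho_\alpha(p_\alpha(x),D_\alpha)$ over the inverse system and then concede that you cannot verify $K3)$ or $K4)_0$; as you yourself observe, this supremum is only lower semicontinuous, and no remetrization of the $X_\alpha$ repairs that, so the implication is not proved. The paper avoids the problem by not working with the inverse system at all: it embeds $X$ in $\mathbb{R}^\tau$ and invokes \cite[Theorem 1.1]{v}, which characterizes skeletally generated compacta by the existence of a map $\mathrm{e}:\mathcal{T}_X\to\mathcal{T}_{\mathbb{R}^\tau}$ sending disjoint open sets to disjoint open sets and such that $\mathrm{e}(U)\cap X$ is dense in $U$. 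After replacing $\mathrm{e}$ by $\mathrm{e}_1(U)=\bigcup\{\mathrm{e}(V):\overline V\subset U\}$, which by compactness commutes with increasing transfinite unions, one sets $d(x,C)=\rho\bigl(x,\overline{\mathrm{e}_1(\mathrm{Int}\,C)}\bigr)$ for a $\kappa$-metric $\rho$ on $\mathbb{R}^\tau$. Then $d(\cdot,C)$ and $\inf_\alpha d(\cdot,C_\alpha)=\rho\bigl(\cdot,\overline{\bigcup_\alpha\mathrm{e}_1(\mathrm{Int}\,C_\alpha)}\bigr)$ are restrictions to $X$ of functions continuous on all of $\mathbb{R}^\tau$, so $K3)$ and $K4)_0$ come for free, while $K1)^*$ follows from the disjointness and density properties of $\mathrm{e}_1$. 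This transfer of the entire continuity burden to the ambient $\kappa$-metrizable cube is the idea your proposal is missing.

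In the converse direction your reduction is essentially the paper's (rerun the proof of \cite[Theorem 1.4]{v} with $d$-admissible countable sets of coordinates), but you misplace where $K4)_0$ is used: it is needed for the \emph{skeletality} of the projections $p_B$, not merely for the limit property $(2)$, and skeletality does not ``already follow from $K1)^*$''. If $\overline{p_B(U)}$ had empty interior, one writes its dense complement $W$ as an increasing union $\bigcup_m W_m$ of basic sets; each $d(\cdot,\overline{p_B^{-1}(W_m)})$ factors through $p_B$ by admissibility, and one needs the infimum $f=\inf_m d(\cdot,\overline{p_B^{-1}(W_m)})$ to be \emph{continuous} in order to conclude that it factors through $p_B$ as a continuous $h$ vanishing on the dense set $W$, hence identically zero, which contradicts $K1)^*$ applied on $U$. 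Without that continuous factorization you cannot transfer the density of $W$ in $X_B$ back to $X$, since $p_B^{-1}(W)$ being dense in $X$ is precisely the statement under proof.
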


\begin{proof}
First, let show that every skeletally generated compactum $X$ is weakly  $\kappa$-metrizable. We embed $X$ as a subset of $\mathbb R^\tau$ for some cardinal $\tau$. Then, according to \cite[Theorem 1.1]{v}, there is a function $\rm e:\mathcal T_X\to\mathcal T_{\mathbb R^\tau}$ between the topologies of $X$ and
$\mathbb R^\tau$ such that: $(i)$ $\rm e(U)\cap\rm e(V)=\varnothing$ provided $U$ and $V$ are disjoint; $(ii)$ $\rm e(U)\cap X$ is dense in $U$. We define a new function $\mathrm e_1:\mathcal T_X\to\mathcal T_{\mathbb R^\tau}$,
$$\mathrm e_1(U)=\bigcup\{\mathrm e(V):V\in\mathcal T_X{~}\mbox{and}{~}\overline V\subset U\}.$$
Obviously $\mathrm e_1$ satisfies conditions $(i)$ and $(ii)$, and it is also monotone, i.e. $U\subset V$ implies
$\mathrm e_1(U)\subset\mathrm e_1(V)$. Moreover, for every increasing transfinite family $\gamma=\{U_\alpha\}$ of open sets in $X$ we have
$\mathrm e_1(\bigcup_\alpha U_\alpha)=\bigcup_\alpha\mathrm e_1(U_\alpha)$. Indeed, if $z\in\mathrm e_1(\bigcup_\alpha U_\alpha)$, then there is an open set
$V\in\mathcal T_X$ with $\overline V\subset\bigcup_\alpha U_\alpha$ and $z\in\mathrm e(V)$. Since $\overline V$ is compact and the family is increasing,
$\overline V$ is contained in some $U_{\alpha_0}$. Hence, $z\in\mathrm e(V)\subset\mathrm e_1(U_{\alpha_0})$. Consequently,
$\mathrm e_1(\bigcup_\alpha U_\alpha)\subset\bigcup_\alpha\mathrm e_1(U_\alpha)$. The other inclusion follows from monotonicity of $\mathrm e_1$.

Because $\mathbb R^\tau$ is $\kappa$-metrizable (see \cite{sc2}), there is a $\kappa$-metric $\rho$ on $\mathbb R^\tau$.
For every regularly closed $C\con X$ and $x\in X$ we can define the function $d(x,C)=\rho(x,\overline{\mathrm e_1(\rm{Int}C)})$, where $\overline{\mathrm e_1(U)}$ is the closure of $\mathrm e_1(U)$ in $\mathbb R^\tau$. It is easily seen that
$d$ satisfies conditions $K2) - K3)$. Let show that it also satisfies $K4)_0$ and $K1^*)$. Indeed, assume $\{C_\alpha\}$ is an increasing transfinite family of regularly closed sets in $X$. We put $U_\alpha=\mathrm{Int}C_\alpha$ for every $\alpha$ and $U=\bigcup_\alpha U_\alpha$.  Thus, $\mathrm e_1(U)=\bigcup_\alpha\mathrm e_1(U_\alpha)$. Since $\{\overline{\mathrm e_1(U_\alpha)}\}$ is an increasing transfinite family of regularly closed sets in $\mathbb R^\tau$, for every $x\in X$ we have
$$\rho(x,\overline{\bigcup_\alpha\mathrm e_1(U_\alpha)})=\inf_\alpha\rho(x,\overline{\mathrm e_1(U_\alpha)})=\inf_\alpha d(x,C_\alpha).$$
Hence, the function $f(x)=\inf_\alpha d(x,C_\alpha)$ is continuous on $X$ because so is $\rho(.,\overline{\bigcup_\alpha\mathrm e_1(U_\alpha)})$.
To show that $K1^*)$ also holds, observe that $d(x,C)=0$ if and only if
$x\in X\cap\overline{\mathrm e_1(\rm{Int}C)}$. 
Because $\mathrm e_1(\rm{Int}C)\cap X$ is dense in $C$, $C\con\overline{\mathrm e_1(\rm{Int}C)}$. Hence,
$V=X\setminus\overline{\mathrm e_1(\rm{Int}C)}$ is contained in $X\setminus C$ and $d(x,C)>0$ iff $x\in V$. To prove $V$ is dense in $X\setminus C$, let $x\in X\setminus C$ and $W_x\subset X\setminus C$ be an open neighborhood of $x$. Then $W\cap\rm{Int}C=\varnothing$, so $\mathrm e_1(W)\cap\mathrm e_1(\rm{Int}C)=\varnothing$. This yields
$\mathrm e_1(W)\cap X\subset V$. On the other hand, $\mathrm e_1(W)\cap X$ is a non-empty subset of $W$, hence $W\cap V\neq\varnothing$. Therefore, $d$ is a weak  $\kappa$-metric on $X$.

The other implication was actually established in the proof of Theorem 1.4] from \cite{v}, and we sketch the proof here. Suppose $d$ is a weak  $\kappa$-metric on $X$ and
embed $X$ in a Tychonoff cube $\mathbb I^A$ with uncountable $A$, where $\mathbb I=[0,1]$. For any countable set $B\con A$ let $\mathcal A_B$ be a countable base for $X_B=\pi_B(X)$ consisting of all open sets in $X_B$ of the form $X_B\cap\prod_{\alpha\in B} V_\alpha$, where each $V_\alpha$ is an open subinterval of $\mathbb I$ with rational end-points and $V_\alpha\neq\mathbb I$ for finitely many $\alpha$. Here $\pi_B:\mathbb I^A\to\mathbb I^B$ denotes the projection, and let $p_B=\pi_B|X$. For any open $U\subset X$ denote by $f_U$ the function $d(\cdot,\overline U)$. We also write $p_B\prec g$, where $g$ is a map defined on $X$, if there is a map $h:p_B(X)\to g(X)$ such that $g=h\circ p_B$. Since $X$ is compact this is equivalent to the following: if
$p_B(x_1)=p_B(x_2)$ for some $x_1,x_2\in X$, then $g(x_1)=g(x_2)$.  We say that a countable set $B\subset A$ is {\em $d$-admissible} if $p_B\prec f_{p_B^{-1}(V)}$ for every $V\in\mathcal A_B^{<\omega}$. Here $\mathcal A_B^{<\omega}$ is the family of all finite unions of elements from $\mathcal A_B$.
Denote by $\mathcal D$ the family of all $d$-admissible subsets of $A$. We are going to show that all maps $p_B:X\to X_B$, $B\in\mathcal D$, are skeletal and the inverse system $S=\{X_B: p^B_D:D\con B, D,B\in\mathcal D\}$ is $\sigma$-continuous. Since $\displaystyle X=\lim_\leftarrow S$, this would imply that $X$ is skeletally generated, see \cite{v} and \cite{vv}. 

Following the proof of \cite[Theorem 1.4]{v}, one can show that for any countable set $B\con A$ there is $D\in\mathcal D$ with $B\con D$, and the union of any increasing sequence of $d$-admissible sets is also $d$-admissible. So, we need to show only that $p_B:X\to X_B$ is a skeletal map for every $B\in\mathcal D$.
Suppose there is an open set $U\subset X$ such that the interior in $X_B$ of $\overline{p_B(U)}$ is empty. Then $W=X_B\setminus\overline{p_B(U)}$
is dense in $X_B$. Let $\{W_m\}_{m\geq 1}$ be a countable cover of $W$ with $W_m\in\mathcal A_B$ for all $m$. Since $\mathcal A_B^{<\omega}$ is finitely additive, we may assume that $W_m\subset W_{m+1}$, $m\geq 1$. Because $B$ is $d$-admissible, $p_B\prec f_{p_B^{-1}(W_m)}$ for all $m$. Hence, there are continuous functions
$h_m:X_B\to\mathbb R$ with $f_{p_B^{-1}(W_m)}=h_m\circ p_B$, $m\geq 1$. Recall that $f_{p_B^{-1}(W_m)}(x)=d(x,\overline{p_B^{-1}(W_m)})$ and
$p_B^{-1}(W)=\bigcup_{m\geq 1}p_B^{-1}(W_m)$. Therefore, $f_{p_B^{-1}(W)}(x)=d(x,\overline{p_B^{-1}(W)})\leq f(x)=\inf_m f_{p_B^{-1}(W_m)}(x)$ for all $x\in X$. Moreover, $f$ is continuous and
$f_{p_B^{-1}(W_{m+1})}(x)\leq f_{p_B^{-1}(W_m)}(x)$ because $W_m\subset W_{m+1}$. The last inequalities together with $p_B\prec f_{p_B^{-1}(W_m)}$
yields that $p_B\prec f$. So, there exists a continuous function $h$ on $X_B$ with $f(x)=h(p_B(x))$ for all $x\in X$. But $f(x)=0$ for all $x\in p_B^{-1}(W)$, so $f(\overline{p_B^{-1}(W)})=0$. This implies that $h(\overline W)=0$.
Since $p_B(\overline{p_B^{-1}(W)})=\overline W=X_B$, we have that $h$ is the constant function zero. Consequently, $f(x)=0$ for all $x\in X$. Finally,
the inequality $d(x,\overline{p_B^{-1}(W)})\leq f(x)$  yields that $d(x,\overline{p_B^{-1}(W)})=0$ for all $x\in X$.
On the other hand, $\overline{p_B^{-1}(W)}\cap U=\varnothing$. So, according to $K1^*)$, there is an  open subset $U'$ of $U$ with $d(x,\overline{p_B^{-1}(W)})>0$ for each $x\in U'$, a contradiction.
\end{proof}

Because any compactification of a skeletally generated space is skeletally generated (see \cite{v}) and the weakly  $\kappa$-metrizability is a  hereditary property with respect to dense subsets, we have the following

\begin{cor}
Every skeletally generated space is weakly  $\kappa$-metrizable.
\end{cor}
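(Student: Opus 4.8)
The plan is to deduce the corollary directly from Theorem~\ref{skel} together with two facts already invoked in the text: every compactification of a skeletally generated space is again skeletally generated (from \cite{v}), and weak $\kappa$-metrizability passes to dense subspaces. So I would first observe that if $X$ is skeletally generated and $bX$ is any compactification of $X$, then $bX$ is a skeletally generated \emph{compact} space, hence by Theorem~\ref{skel} it is weakly $\kappa$-metrizable. Then $X$ sits as a dense subspace of $bX$, and I would conclude that $X$ is weakly $\kappa$-metrizable by restricting the weak $\kappa$-metric of $bX$ to $X$.

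The one genuine point to verify is therefore that weak $\kappa$-metrizability is hereditary with respect to dense subspaces. Given a weak $\kappa$-metric $D$ on $bX$ and the dense subset $X\subset bX$, I would define $d(x,\overline{U}^X)=D(x,\overline{U}^{bX})$ for $x\in X$ and $U$ open in $X$; here $\overline{U}^{bX}$ is regularly closed in $bX$ because $U$ is open, and the assignment $\overline{U}^X\mapsto\overline{U}^{bX}$ is a well-defined order isomorphism between the regularly closed sets of $X$ and those regularly closed sets of $bX$ arising as closures of open subsets of $X$. Conditions $K2)$ and $K3)$ for $d$ are then immediate from the corresponding conditions for $D$ (for $K3)$, note that the restriction of a continuous function on $bX$ to $X$ is continuous). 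For $K1)^*$: if $\overline{U}^X\subsetneqq X$ then $\overline{U}^{bX}\subsetneqq bX$, so there is a dense open $\widetilde V\subset bX\setminus\overline{U}^{bX}$ with $D(\cdot,\overline{U}^{bX})>0$ exactly on $\widetilde V$; intersecting with $X$ gives an open subset $V=\widetilde V\cap X$ of $X\setminus\overline{U}^X$, dense there because $X$ is dense in $bX$, on which $d(\cdot,\overline{U}^X)>0$ and only there. For $K4)_0$: an increasing transfinite family $\{\overline{U_\alpha}^X\}$ of regularly closed subsets of $X$ yields the increasing family $\{\overline{U_\alpha}^{bX}\}$ in $bX$, and the function $y\mapsto\inf_\alpha D(y,\overline{U_\alpha}^{bX})$ is continuous on $bX$, so its restriction $x\mapsto\inf_\alpha d(x,\overline{U_\alpha}^X)$ is continuous on $X$.

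The main obstacle, such as it is, is purely bookkeeping: one must be slightly careful that the regularly closed sets of $X$ are precisely the traces on $X$ of the closures in $bX$ of open subsets of $X$, and that the correspondence respects inclusions and closures of unions (which is where density of $X$ in $bX$ is used repeatedly). No new idea beyond Theorem~\ref{skel} is needed; the corollary is essentially a packaging of it. I would close by remarking that the same argument shows, more generally, that any dense subspace of a weakly $\kappa$-metrizable space is weakly $\kappa$-metrizable, which together with Theorem~\ref{skel} is exactly what yields the statement for arbitrary (not necessarily compact) skeletally generated spaces.
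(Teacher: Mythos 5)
Your proposal is correct and follows essentially the same route as the paper: pass to a compactification (which is skeletally generated by \cite{v}), apply Theorem~\ref{skel}, and then use heredity of weak $\kappa$-metrizability with respect to dense subspaces. The paper merely asserts this heredity (in analogy with the quasi $\kappa$-metric case in Proposition 3.1), whereas you verify it explicitly via the order isomorphism $\overline{U}^X\mapsto\overline{U}^{bX}$; that verification is sound.
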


All results in Section 3, except Proposition 3.10, remain valid for weakly $\kappa$-metrizable spaces. Theorem \ref{skel} and a result of Kucharski-Plewik \cite[Theorem 6]{kp2} imply that Proposition 3.10 is also true for weakly $\kappa$-metrizable compacta. But the following questions are still open.
\begin{que}
Is any product of weakly $\kappa$-metrizable spaces weakly $\kappa$-metrizable?
\end{que}
If there exists a counter example to Question 4.3 which, in addition has a countable cellularity, then next question would have also a negative answer.
\begin{que}
Is any weakly $\kappa$-metrizable space with a countable cellularity  skeletally generated?
\end{que}

\smallskip
\textbf{Acknowledgments.} The author would like to express his gratitude to A. Kucharski for his careful reading and suggesting some improvements of the paper.



\begin{thebibliography}{999}

\bibitem{chi}
A.~Chigogidze, \textit{On $\kappa$-metrizable spaces}, Uspehi Mat. Nauk \textbf {37} (1982), no. 2, 241--242 (in Russian).

\bibitem{dkz}
P.~Daniels,~K.~Kunen and H.~Zhou, \textit{On the open-open game},
Fund. Math. \textbf {145} (1994), no. 3, 205--220.


\bibitem{ku1}
A.~Kucharski and S.~Turek, \textit{A generalization of $\kappa$-metrizable spaces}, arXiv:161208838.

\bibitem{kp2}
A.~Kucharski and S.~Plewik, \textit{Skeletal maps and $I$-favorable
spaces}, Acta Univ. Carolin. Math. Phys. \textbf{51} (2010), 67--72.


\bibitem{kp2}
A.~Kucharski and S.~Plewik, \textit{Game approach to universally
Kuratowski-Ulam spaces}, Topology Appl. \textbf{154} (2007), no. 2,
421--427.


\bibitem{k}
K.~Kuratowski, \textit{Topology, vol. I}, Academic Press, New York;
PWN-Polish Scientific Publishers, Warsaw 1966.


\bibitem{sc2}
E.~Shchepin, \textit{On $\kappa$-metrizable spaces},
Math. USSR Izvestija \textbf{14} (1980), no. 2, 1--34.

\bibitem{sc1}
E.~Shchepin, \textit{Topology of limit spaces of uncountable inverse
spectra}, Russian Math. Surveys \textbf{315} (1976), 155--191.


\bibitem{v}
V.~Valov, \textit{I-favorable spaces: Revisited}
Topology Proc. \textbf{51} (2018), 277--292.

\bibitem{vv}
V.~Valov, \textit{External characterization of $\mathrm I$-favorable
spaces}, Mathematica Balkanica \textbf{25} (2011), no. 1-2, 61--78


\end{thebibliography}
\end{document}